\documentclass{article}

\usepackage{amsfonts,amsmath, amsthm, amscd, amssymb, graphicx, color, mathrsfs, stmaryrd}

\addtolength{\textwidth}{0.3cm} \addtolength{\hoffset}{-0.5cm}
\addtolength{\textheight}{0.3cm}

\DeclareMathOperator{\Ext}{Ext} 
\DeclareMathOperator{\diam}{diam} 

\begin{document}

\newtheorem{thm}{Theorem}[section]
\newtheorem{theo}[thm]{Theorem}
\newtheorem{prop}[thm]{Proposition}
\newtheorem{coro}[thm]{Corollary}
\newtheorem{lema}[thm]{Lemma}
\newtheorem{defi}[thm]{Definition}
\newtheorem{ejem}[thm]{Example}
\newtheorem{rema}[thm]{Remark}
\newtheorem{fact}[thm]{Fact}
\newtheorem{open}[thm]{PROBLEM}

\newcommand{\lip}{{\rm Lip}}
\newcommand{\lipo}{ {\rm Lip}_0 }
\newcommand{\conv}{\operatorname{conv}}
\newcommand{\cconv}{\overline{\conv}}

\title{Representation in $C(K)$ by Lipschitz functions}
\author{M. Raja\thanks{This research has been supported by:  Fundaci\'on S\'eneca -- ACyT Regi\'on de Murcia, project 21955/PI/22; and grant PID2021-122126NB-C32  funded by MCIN/AEI/ 10.13039/501100011033 and by “ERDF A way of making Europe”, by the EU.}}
\date{June, 2024}

\maketitle

\begin{abstract}
The isometric universality of the spaces $C(K)$ for $K$ a non scattered Hausdorff compact does not take into account the ``quality'' of the representation. Indeed, the existence of an isometric copy of a separable Banach space $X$ into $C(K)$ made of regular enough functions, say Lipschitz with respect to a lower semicontinuous metric defined on $K$, imposes severe restrictions to both $X$ and $K$. In this paper, we present a systematic treatment of the representation of Banach spaces into $C(K)$ by Lipschitz functions improving previous results of the author.
\end{abstract}




\section{Introduction}

A celebrated result of Banach and Mazur says that any separable Banach space $X$ can be found linearly isometrically embedded into $C[0,1]$, the space of continuous functions over the unit interval endowed with the supremum norm. A different matter is to find explicit representations simple Banach spaces. For instance, 
$$ (x,y) \rightarrow  x \cos(\pi t) + y \sin(\pi t) , ~~t \in [0,1]$$
gives a representation of the Euclidean plane $({\Bbb R}^2, \| \cdot \|_2)$ into $C[0,1]$ by fairly nice functions. However, it is not easy at all to the same with $({\Bbb R}^3, \| \cdot \|_2)$. Indeed, the functions witnessing the isometric embedding cannot be even Lipschitz. The reason, first noticed by W. F. Donoghue \cite{donoghue}, is related to gap between the topological dimensions of the interval $[0,1]$ and of the sphere ${\Bbb S}^2= \partial B_{{\Bbb R}^3}$. We may consider a more general problem: given a Hausdorff compact space $K$ and a finer metric $d$, study the Banach spaces that isometrically embed into $C(K)$ as a subset of $d$-Lipschitz functions.\\

Some results about this topic appeared scattered in several papers \cite{raja1, raja2, raja3, jonard_raja}. The aim of this note is to bring together the main ideas on {\it Lipschitz subspaces of} $C(K)$ with improvements and some new results. Despite the quite mathematical insignificance of our original problem, we have found very interesting the connections between Topology and Banach space theory motivated by this research. Moreover, there are some ties with other trending topics, such as {\it lineability} (see the monograph \cite{linea})  and {\it Lipschitz-free Banach spaces} (see the book \cite{Wea}, although the author prefers the name {\it Arens-Eells} for these spaces).\\

The paper is organised as follows. The next section gathers some tools and generalities on compact spaces endowed with a lower semicontinuous metric. The third section deals with properties of Lipschitz subspaces both from the isomorphic and isometric points of view, however the results in the isometric theory are much more satisfactory. The fourth section covers the case when the metric actually metrizes $K$. The fifth section is devoted to the use of fragmentability, the main property making a difference for strictly finer lower semicontinuous metrics. So far in our paper, the only infinite-dimensional Lipschitz subspaces identified into $C(K)$  are copies of $c_0$ or $\ell_1$, thus the last section deals with Lipschitz embeddings and obstructions for other Banach spaces. In order to make this note more independent of our previous work on the subject, we have included some sketched proofs of the quoted key results, or complete proofs in case they are simpler here or lead to a statement more general than in the original paper.\\

All the Banach spaces considered are real and all the compact spaces are supposed to be Hausdorff. Our notation is totally standard and we address to generic references for any unexplained definition \cite{JL, banach}.

\section{The bitopological setting}

In Banach space theory, it is usual to deal with more than one topology: the norm and weak topologies, the weak$^*$ in case of dual Banach spaces, or the pointwise for function spaces. That bitopological setting, actually one topology and a metric generating a finer topology, can be discussed more generally. We will assume compactness for the coarse topology, so we will mainly use $K$ to denote the space, and  lower semicontinuity (lsc) for the metric as a function $d:K \times K \rightarrow [0,+\infty)$.

\begin{theo}
Let $K$ be a compact space and $d$ be a lower semicontinuous metric defined on $K$. Then:
\begin{itemize}
\item[(a)] the topology generated on $K$ by $d$ is finer;
\item[(b)] $K$ is complete when endowed with $d$;
\item[(c)] for every closed subset $H \subset K$ and $r>0$, the set 
$$ B[H,r] = \{x \in K: d(H,x) \leq r\} $$
is closed;
\item[(d)] every $f \in C(K)$ is $d$-uniformly continuous.
\end{itemize}
\end{theo}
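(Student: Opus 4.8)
The common engine behind all four parts is a single semicontinuity inequality: since $d$ is lsc as a map $K\times K\to[0,+\infty)$, whenever a net $(x_\alpha,y_\alpha)\to(x,y)$ in the product $\tau$-topology one has $d(x,y)\le\liminf_\alpha d(x_\alpha,y_\alpha)$. Combined with the $\tau$-compactness of $K$ — which lets me extract a $\tau$-convergent subnet from any net — this mechanism handles everything, and I would state it once at the start. The plan is then to apply the template "take a net, extract a $\tau$-convergent subnet, push the inequality through" in each of (a)--(d).

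For (a) I want $\tau\subseteq\tau_d$. Because $\tau_d$ is a metric topology it suffices to show every $\tau$-closed set $C$ is $\tau_d$-closed, and metrizability reduces this to sequences: if $d(x_n,x)\to 0$ with $x_n\in C$, I extract a $\tau$-convergent subnet $x_{n_i}\to y$ with $y\in C$ (here $C$ is $\tau$-compact), and the engine gives $d(y,x)\le\liminf_i d(x_{n_i},x)=0$, so $y=x\in C$. Thus $d$-convergence forces $\tau$-convergence and $\tau_d$ is finer. For (b), given a $d$-Cauchy sequence $(x_n)$ I extract a $\tau$-convergent subnet $x_{n_i}\to x$; fixing $\varepsilon>0$ and $n$ beyond the Cauchy threshold, I have $d(x_n,x_{n_i})\le\varepsilon$ for cofinally many $i$, and letting the subnet converge the engine yields $d(x_n,x)\le\varepsilon$. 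Hence $x_n\to x$ in $d$ and $(K,d)$ is complete.

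For (c) the one genuine subtlety surfaces: $d(H,x)=\inf_{h\in H}d(h,x)$ is an infimum that is not a priori attained. I would resolve this by noting that $h\mapsto d(h,x)$ is lsc on the $\tau$-compact set $H$, hence attains its minimum there; so for a net $x_\alpha\to x$ with $x_\alpha\in B[H,r]$ I can pick $h_\alpha\in H$ with $d(h_\alpha,x_\alpha)\le r$, extract a subnet with $h_\alpha\to h\in H$ (and simultaneously $x_\alpha\to x$), and read off $d(h,x)\le r$ from the engine, giving $x\in B[H,r]$. For (d), I argue by contradiction: if $f\in C(K)$ is not $d$-uniformly continuous there are $\varepsilon>0$ and sequences with $d(x_n,y_n)\to0$ but $|f(x_n)-f(y_n)|\ge\varepsilon$; passing to $\tau$-convergent subnets $x_{n_i}\to x$ and $y_{n_i}\to y$, the engine forces $d(x,y)=0$, i.e.\ $x=y$, while $\tau$-continuity of $f$ forces $|f(x_{n_i})-f(y_{n_i})|\to|f(x)-f(y)|=0$, contradicting the lower bound $\varepsilon$.

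I expect the main obstacle to be purely bookkeeping rather than conceptual: ensuring the subnet extractions are compatible with the $\liminf$ (in (b) the restriction to indices past the Cauchy threshold, in (c) the simultaneous convergence of $h_\alpha$ and $x_\alpha$), and justifying the attainment of the infimum in (c) via lower semicontinuity on a compact set. Once the semicontinuity inequality is isolated, each item is a short application of it.
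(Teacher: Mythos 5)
Your proposal is correct, but it takes a different route from the paper in a trivial sense: the paper does not actually prove this theorem. It disposes of (a)--(c) by citing Lemma 2.1 and Lemma 2.2 of Jayne--Namioka--Rogers \cite{JNR}, and declares (d) ``an easy exercise'' left to the reader. You, by contrast, give a complete, self-contained argument, organized around a single mechanism: the net form of lower semicontinuity, $d(x,y)\le\liminf_\alpha d(x_\alpha,y_\alpha)$ whenever $(x_\alpha,y_\alpha)\to(x,y)$ in the product of the compact topology, combined with subnet extraction by compactness. All four applications of this engine check out: in (a) the reduction to sequential closedness is legitimate because $\tau_d$ is metrizable, and the limit identification $y=x$ uses Hausdorffness correctly; in (b) the restriction of the $\liminf$ to indices past the Cauchy threshold is sound (a subnet is eventually beyond any fixed index, and cofinally often suffices for the $\liminf$ bound anyway); in (c) you correctly spot the one real subtlety --- attainment of $\inf_{h\in H}d(h,x_\alpha)$ --- and resolve it with the standard fact that an lsc function on a compact set attains its infimum, after which the simultaneous subnet extraction in $H\times K$ is routine; and (d) is a clean compactness-plus-continuity contradiction, which is presumably the intended ``easy exercise.'' What your approach buys is independence from the external reference and a visible unification of the four statements under one inequality; what the paper's approach buys is brevity, since these facts are standard in the literature it builds on.
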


\begin{proof}
Statements (a) and (b) are included in \cite[Lemma 2.1]{JNR}, meanwhile  (c) is \cite[Lemma 2.2]{JNR}. We left (d), which is an easy exercise, to the reader.
\end{proof}

From now on $K$ will be a compact Hausdorff space together with a lower semicontinuous metric denoted $d$. The supremum norm of $C(K)$ is denoted $\| \cdot \|$, as any generic norm, and the notation $\| \cdot \|_\infty$ is used  to avoid confusions when necessary.
In case $K$ is a weak$^*$ compact of a dual Banach space (notably, the dual unit ball), the metric $d$ will be the one induced by the norm, unless otherwise stated.\\

The Lipschitz constant of a real function $f:K \rightarrow {\Bbb R}$ is defined as follows 
$$ L(f) = \sup \left\{ \frac{|f(t_1)-f(t_2)|}{d(t_1,t_2)}: t_1,t_2 \in K, t_1 \not = t_2 \right\} .$$
A real function $f$ defined on $K$ is said to be Lipschitz if $L(f) < +\infty$.
The set of Lipschitz functions defined on $K$ will be denoted $\lip(K)$. We will also consider Lipschitz mappings between metric spaces, for whom the Lipschitz constant is defined likewise.\\

The following important result due to Eva Kopecká \cite{Eva} implies, among other things, the great availability of functions that are both continuous and $d$-Lipschitz.

\begin{theo}\label{interpol}
Let $H \subset K$ be closed and let $f: H \rightarrow [a,b]$ be continuous and Lipschitz. Then there exists $\tilde{f}: K \rightarrow [a,b]$ being continuous and Lipschitz with $L(\tilde{f})=L(f)$ such that $\tilde{f}|_H=f$.
\end{theo}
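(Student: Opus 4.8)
The plan is to realise $\tilde f$ as a continuous selection squeezed between the two canonical $L$-Lipschitz extensions of $f$, and to reduce the range constraint to a harmless truncation performed at the very end. Write $L=L(f)$ and assume $H\neq\emptyset$ (otherwise take $\tilde f$ constant). The retraction $\sigma(t)=\max(a,\min(b,t))$ of $\mathbb{R}$ onto $[a,b]$ is $1$-Lipschitz and fixes $[a,b]$ pointwise, so if I first produce a coarse-continuous $\tilde f_0:K\to\mathbb{R}$ that is $d$-Lipschitz with $L(\tilde f_0)\leq L$ and agrees with $f$ on $H$, then $\tilde f:=\sigma\circ\tilde f_0$ is coarse-continuous, takes values in $[a,b]$, still satisfies $\tilde f|_H=f$ (because $f(H)\subseteq[a,b]$), and has $L(\tilde f)\leq L$. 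Since any extension of $f$ forces $L(\tilde f)\geq L(f)=L$, the Lipschitz constant is then exactly $L$. Hence it suffices to drop the target interval and build $\tilde f_0$.

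First I would introduce the infimal and supremal convolutions
$$ g(x)=\inf_{y\in H}\bigl(f(y)+L\,d(x,y)\bigr),\qquad h(x)=\sup_{y\in H}\bigl(f(y)-L\,d(x,y)\bigr). $$
Routine triangle-inequality computations show that $g$ and $h$ are $d$-Lipschitz with constant $L$, that $g|_H=h|_H=f$ (using that $f$ is $L$-Lipschitz on $H$), and that $h\leq g$ everywhere. In fact $h$ and $g$ are respectively the smallest and the largest $L$-Lipschitz extension of $f$ to $K$, so any admissible $\tilde f_0$ must lie in the band $h\leq\tilde f_0\leq g$; conversely any $d$-Lipschitz $\tilde f_0$ trapped in that band automatically satisfies $\tilde f_0|_H=f$, since $h=g=f$ on $H$.

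The second step is to locate $g$ and $h$ in the coarse topology, and this is where lower semicontinuity of $d$ and compactness enter. As $H$ is closed in the compact space $K$ it is compact, and for fixed $x$ the map $y\mapsto f(y)+L\,d(x,y)$ is coarse-lsc (a continuous plus an lsc function), so the infimum defining $g(x)$ is attained. If a net $x_\alpha\to x_0$ in the coarse topology satisfies $g(x_\alpha)\leq c$, I choose minimisers $y_\alpha$ and pass to a subnet with $y_\alpha\to y_0\in H$; joint lower semicontinuity of $d$ on $K\times K$ together with continuity of $f$ yields $f(y_0)+L\,d(x_0,y_0)\leq\liminf_\alpha\bigl(f(y_\alpha)+L\,d(x_\alpha,y_\alpha)\bigr)\leq c$, whence $g(x_0)\leq c$. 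Thus $\{g\leq c\}$ is coarse-closed for every $c$, i.e. $g$ is coarse-lsc, and the symmetric argument gives that $h$ is coarse-usc. Neither need be continuous: the infimal convolution may jump down in the limit precisely because $d$ may jump up, so a genuine usc--lsc gap must be bridged.

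The final and hardest step is a \emph{bitopological Lipschitz insertion}: to produce a coarse-continuous, $d$-Lipschitz $\tilde f_0$ with $h\leq\tilde f_0\leq g$ and $L(\tilde f_0)\leq L$. I expect this to be the main obstacle. The difficulty is that the classical constructions bridging a usc--lsc sandwich (Kat\v etov--Tong, Michael) assemble the function from Urysohn-type pieces, whereas the natural $d$-Lipschitz Urysohn pieces here, such as $x\mapsto\max(0,r-d(x,A))$, are only coarse-lsc and not coarse-continuous; coarse continuity and the $d$-Lipschitz bound thus pull against one another. My plan is to run a Kat\v etov--Tong style successive approximation, building a sup-norm Cauchy series of $d$-Lipschitz corrections that narrows the band $[h,g]$ by a fixed fraction at each stage, and to recover coarse continuity of the sup-norm limit by controlling the nonnegative coarse-lsc defect $g-h$ (which vanishes on $H$) through the compactness of $K$. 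The uniform limit then lies in $C(K)$, being a uniform limit of $d$-Lipschitz functions it is $d$-Lipschitz with constant $\leq L$, and it stays trapped in $[h,g]$; composing with $\sigma$ completes the construction.
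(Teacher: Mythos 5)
Your preparatory steps are sound: the truncation reduction via $\sigma$, the McShane--Whitney envelopes $g$ and $h$ as the largest and smallest $L$-Lipschitz extensions, and the verification (using compactness of $H$ and joint lower semicontinuity of $d$) that $g$ is coarse-lsc and $h$ is coarse-usc are all correct. Bear in mind, though, that the paper offers no proof of this statement to compare against --- it is quoted as Matou\v{s}kov\'a's theorem \cite{Eva} --- so your argument has to stand on its own. It does not, because your fourth step, which you yourself flag as ``the main obstacle,'' is not a proof but a declaration of intent, and that step \emph{is} the theorem: everything before it is routine, and the entire difficulty of Theorem~\ref{interpol} is concentrated in producing one function that is simultaneously coarse-continuous and $d$-Lipschitz inside the band $[h,g]$.

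The plan you sketch for that step has a concrete circularity. A Kat\v{e}tov--Tong-style iteration needs a supply of building blocks (``$d$-Lipschitz corrections'') that are \emph{both} coarse-continuous and $d$-Lipschitz; but, as you yourself observe, the natural Lipschitz Urysohn pieces $x \mapsto \max(0, r - d(x,A))$ are only coarse-lsc, and manufacturing even a single coarse-continuous function with a Lipschitz bound separating two $d$-separated closed sets is a bitopological Urysohn lemma that is essentially equivalent to the theorem you are trying to prove --- indeed, the paper invokes Theorem~\ref{interpol} precisely to produce such separating functions (in the proofs of Theorem~\ref{cecero} and of the linearization theorem in Section~2). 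So each ``correction'' in your iteration is an unproved instance of the same insertion problem, and no mechanism is given to break this circle. What the actual proof requires, and what is entirely absent from your sketch, is a quantitative dyadic construction: open level sets $U_q$ indexed by dyadic rationals, nested in the usual Urysohn fashion by normality of $K$, but with the additional property that consecutive levels stay $d$-separated at a scale proportional to the dyadic step --- this is where closedness of the sets $B[C,r]$ for $C$ closed (item (c) of the first theorem of Section~2) enters in an essential way. That separation at every scale is what simultaneously yields coarse continuity and the Lipschitz estimate for the limit function; compactness of $K$ alone, or control of the defect $g-h$, does not substitute for it.
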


Now we will see two ways to linearize compacta with lower semicontinuous metrics. Fix a point $t_0 \in K$ and let 
$$ \lipo(K)= \{f \in \lip(K): f(t_0)=0\}. $$

Using Theorem \ref{interpol}, we can give a simpler proof of the following result of Jayne, Namioka and Rogers  \cite[Theorem 2.1]{JNR}.

\begin{theo}
Let $K$ be a compact space and $d$ be a bounded lower semicontinuous metric. Then there exists a Banach space $X$ such that $K$ embeds as a $w^*$-compact subset of $X^*$ and $d$ coincides with the metric induced by the norm of $X^*$.
\end{theo}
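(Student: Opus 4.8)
The plan is to realize $K$ inside the dual of the Banach space of \emph{continuous} Lipschitz functions. Concretely, I would take
$$ X = C(K) \cap \lipo(K) $$
equipped with the Lipschitz norm $L(\cdot)$, and consider the evaluation map $\Phi: K \to X^*$, $\Phi(t) = \delta_t$, where $\delta_t(f) = f(t)$. Since every $f \in X$ satisfies $f(t_0)=0$, for $L(f)\le 1$ one has $|f(t)| \le d(t,t_0)$, so each $\delta_t$ is bounded; more importantly, $|f(t_1)-f(t_2)| \le d(t_1,t_2)L(f)$ gives the easy inequality $\|\delta_{t_1}-\delta_{t_2}\| \le d(t_1,t_2)$ at once.

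First I would check that $(X, L)$ is a Banach space, and this is where the boundedness of $d$ enters. Writing $D = \diam_d(K)$, the estimate $|f(t)| = |f(t)-f(t_0)| \le L(f)\,D$ shows that $L$-convergence dominates uniform convergence, so an $L$-Cauchy sequence converges uniformly to a continuous function vanishing at $t_0$, and a routine argument upgrades this to convergence in the $L$-norm. Thus $X$ is complete, and $L$ is a genuine norm on it since $L(f)=0$ forces $f$ to be constant, hence $f=0$ because $f(t_0)=0$.

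The crux is the reverse isometric inequality $\|\delta_{t_1}-\delta_{t_2}\| \ge d(t_1,t_2)$, and this is exactly where Theorem \ref{interpol} is needed. The natural candidate $d(\cdot,t_2)$ is $1$-Lipschitz but, since $d$ is only lower semicontinuous, it need not be continuous, so it may fail to lie in $X$. Instead I would work on the finite (hence closed) set $H=\{t_1,t_2\}$, define $f(t_2)=0$ and $f(t_1)=d(t_1,t_2)$, note that $f:H\to[0,d(t_1,t_2)]$ is continuous with $L(f)=1$, and invoke Theorem \ref{interpol} to obtain a continuous Lipschitz extension $\tilde f$ with $L(\tilde f)=1$. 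After subtracting the constant $\tilde f(t_0)$ we land in $X$ with $L$-norm $1$ and value difference $d(t_1,t_2)$ between $t_1$ and $t_2$, yielding the required lower bound. Hence $\Phi$ is an isometry from $(K,d)$ onto $\Phi(K) \subset X^*$ endowed with the norm metric.

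Finally I would address the topology. Since the weak$^*$ topology on $X^*$ is the topology of pointwise convergence on $X$ and every element of $X$ is continuous on $K$, the map $\Phi$ is continuous from $K$ into $(X^*, w^*)$; being a $d$-isometry, it is injective. A continuous injection from a compact space into a Hausdorff space is a homeomorphism onto its image, so $\Phi(K)$ is weak$^*$ compact and homeomorphic to $K$, completing the proof. I expect the only genuine obstacle to be the continuity requirement in the lower bound: it is precisely the lower semicontinuity of $d$ that rules out the naive test function $d(\cdot,t_2)$ and forces the appeal to Kopeck\'a's extension theorem.
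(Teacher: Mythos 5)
Your proposal is correct and follows essentially the same route as the paper: the same space $X = C(K)\cap\lipo(K)$ with the Lipschitz norm, the same evaluation map into $X^*$, and the same use of Kopeck\'a's extension theorem on the two-point set $\{t_1,t_2\}$ (with the constant adjustment) to get the reverse isometric inequality. Your treatment of completeness and of the weak$^*$ homeomorphism simply spells out details the paper leaves implicit.
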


\begin{proof} Take $X = \lipo(K) \cap C(K)$ with the Lipschitz seminorm $L$, that is an actual norm here. 
Note that $X$ is a Banach space because the convergence with respect to the Lipschitz seminorm induces the uniform convergence on bounded sets of  $\lipo(K)$.
Denote by $\hat{t}(f) = f(t)$ the evaluation. Obviously, the assignment $t \rightarrow \hat{t}$ is an homeomorphism. 
Note that for any $t_1, t_2 \in K$ with $t_1 \not = t_2$ there is $f \in C(K) \cap \lip(K)$ with $L(f)=1$ such that $f(t_2) -f(t_1) =d(t_1,t_2)$. Indeed, apply Theorem \ref{interpol} to $f$ defined on $\{t_1,t_2\}$ by $f(t_1)=0, f(t_1)=d(t_1, t_2)$. Adding a constant we may even get that $f(t_0)=0$ and thus $f \in B_X$. Now, for any two points $t_1,t_2 \in K$, we have
$$ d(t_1, t_2) = \sup \{ |f(t_1) - f(t_2)|: f \in B_X \}$$
$$ = \sup \{ | \hat{t}_1(f) - \hat{t}_2(f)  |: f \in B_X \} = \| \hat{t}_1 - \hat{t}_2 \| ,$$
as wished.
\end{proof}

The previous construction allows the linearization of mappings in the following sense: a continuous mapping between compacts that is also Lipschitz extends to a linear weak$^*$ continuous mapping, obviously Lipschitz for the norms. Let us mention the relation to the Lipschitz free spaces (Arens-Eells spaces in \cite{Wea}). Given a metric space $M$ with distinguished point $0 \in M$ (a {\it pointed metric space}), we consider as before $ \lipo(M)= \{f \in \lip(M): f(t_0)=0\}$. On  $ \lipo(M)$ the Lipschitz seminorm $L$ is an actual norm. It is possible to prove that $(\lipo(M), L)$ is isometric to a dual Banach space. The Lipschitz free space generated by $M$ is a predual for $\lipo(M)$ that can be identified by the completed linear span of $M$ into 
$\lipo(M)^*$, where the point $0$ becomes the origin of the vector space. 
The Lipschitz free spaces have been intensely studied over the last two decades, however we can not benefit from the research as an additional topology on $M$ does not play a role, as is our case.\\

We finish this section with an interesting result of Benyamini \cite{Benya}.

\begin{theo}
Let $K$ be a metrizable compact space. There is weak$^*$ continuous retraction from $C(K)^*$ onto $B_{C(K)^*}$ that is also Lipschitz with constant one.
\end{theo}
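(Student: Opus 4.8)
The plan is to realize $C(K)^{*}$ concretely as the space $M(K)$ of signed Radon measures with the total variation norm $\|\mu\|=|\mu|(K)$, so that $B_{C(K)^{*}}=\{\mu:|\mu|(K)\le 1\}$, and to build the retraction directly on measures. Since $K$ is metrizable, $C(K)$ is separable, hence the $w^{*}$-topology is metrizable on each ball $nB_{M(K)}$; this lets me verify $w^{*}$-continuity sequentially and assemble the retraction coherently over the balls $nB_{M(K)}$. Before anything else I would record why the obvious candidate, the radial retraction $\mu\mapsto\mu/\max(1,\|\mu\|)$, is useless here: the norm $\mu\mapsto\|\mu\|$ is only $w^{*}$-lower semicontinuous, so the map is not $w^{*}$-continuous; and it is not $1$-Lipschitz either, since for $x\ne y$ the pair $\delta_{x}$ (norm one) and $\delta_{x}+\varepsilon\delta_{y}$ has radial images at distance $2\varepsilon/(1+\varepsilon)$ while the inputs are at distance $\varepsilon$, forcing Lipschitz constant $2$. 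A genuinely different construction is thus required, and it must exploit both the metrizability of $K$ and the lattice ($L$-space) structure of $M(K)$.

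The building blocks I would use are finite-dimensional $w^{*}$-continuous models. Fix a continuous partition of unity $\phi_{1},\dots,\phi_{n}\in C(K)$ ($\phi_i\ge 0$, $\sum_i\phi_i=1$) together with points $x_{i}\in\operatorname{supp}\phi_{i}$, and form the discretization $D_{\mathcal{P}}\mu=\sum_{i}\langle\mu,\phi_{i}\rangle\,\delta_{x_{i}}$. This $D_{\mathcal{P}}$ is linear and $w^{*}$-continuous, satisfies $\|D_{\mathcal{P}}\mu\|=\sum_i|\langle\mu,\phi_i\rangle|\le\langle|\mu|,\sum_i\phi_i\rangle=\|\mu\|$, and $D_{\mathcal{P}}\mu\to\mu$ in the $w^{*}$-topology as the partitions refine (because $\sum_i\phi_i\,f(x_i)\to f$ uniformly for every $f\in C(K)$). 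Its range is linearly isometric to $\ell_{1}^{n}$ via $(c_{i})\mapsto\sum_i c_{i}\delta_{x_{i}}$, and on $\ell_{1}^{n}$ I would apply the soft-thresholding retraction onto the $\ell_{1}$-ball: for $\|c\|_{1}>1$ set $r(c)_{i}=\operatorname{sign}(c_{i})(|c_{i}|-t)^{+}$ with $t\ge 0$ determined by $\sum_i(|c_{i}|-t)^{+}=1$. This is explicit, single-valued, continuous, and $1$-Lipschitz in $\ell_{1}$ (hence in total variation); the composition $r_{\mathcal{P}}$ of soft-thresholding with $D_{\mathcal{P}}$ is then $w^{*}$-continuous, $1$-Lipschitz, and takes values in $B_{M(K)}$. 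Crucially, on $B_{M(K)}$ the thresholding is inert because $\|D_{\mathcal{P}}\mu\|\le 1$, so there $r_{\mathcal{P}}\mu=D_{\mathcal{P}}\mu\to\mu$ in the $w^{*}$-topology.

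The retraction $r$ would then be obtained as a limit of the $r_{\mathcal{P}}$ along a refining sequence of partitions, diagonalizing over the $w^{*}$-metrizable balls $nB_{M(K)}$. The $1$-Lipschitz bound is a closed condition under pointwise $w^{*}$-limits, so it passes to $r$, and by the last observation $r$ restricts to the identity on $B_{C(K)^{*}}$. The main obstacle, and the genuine heart of the theorem, is the $w^{*}$-continuity of this limit. The difficulty is structural: every natural $1$-Lipschitz device — radial scaling, truncation of mass by quantiles, soft-thresholding against a fixed reference measure, or anything phrased through the Jordan decomposition or the lattice map $\mu\mapsto\mu^{+}$ — is individually $w^{*}$-discontinuous. (For instance $\sin(nx)\,dx\to 0$ in the $w^{*}$-topology while $(\sin nx)^{+}dx\to\tfrac{1}{\pi}\,dx\ne 0$, so $\mu\mapsto\mu^{+}$ is not $w^{*}$-continuous.) Reconciling the sharp constant $1$ with $w^{*}$-continuity is therefore the crux, and it is precisely where compactness and metrizability must be used: one has to choose the finite models compatibly and prove a uniform equicontinuity estimate guaranteeing that the net $r_{\mathcal{P}}$ converges and that its limit cannot jump along $w^{*}$-convergent sequences inside a ball. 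Once this estimate is in hand, $r$ is simultaneously $w^{*}$-continuous, $1$-Lipschitz as a limit of $1$-Lipschitz maps, and the identity on $B_{C(K)^{*}}$, which is exactly the asserted retraction.
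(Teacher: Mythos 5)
The paper itself gives no proof of this theorem: it is quoted directly from Benyamini's article \cite{Benya}, so your proposal cannot be compared with an internal argument and must stand on its own. It does not, because of a concretely false step. Your building block, soft-thresholding $r(c)_i=\operatorname{sign}(c_i)\,(|c_i|-t)^{+}$ with $t$ chosen so that $\sum_i(|c_i|-t)^{+}=1$, is the \emph{Euclidean} projection onto the $\ell_1$-ball; it is $1$-Lipschitz for the $\ell_2$ norm but \emph{not} for the $\ell_1$ norm. In $\ell_1^3$ take $c=(1,1,1)$ and $c'=(1+\varepsilon,1,1)$: then $r(c)=(1/3,1/3,1/3)$ and $r(c')=\bigl((1+2\varepsilon)/3,(1-\varepsilon)/3,(1-\varepsilon)/3\bigr)$, so $\|r(c')-r(c)\|_1=\tfrac{4}{3}\varepsilon$ while $\|c'-c\|_1=\varepsilon$. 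More generally, on a region where $k$ coordinates are active the differential of $r$ is $I-\frac{1}{k}J$ (with $J$ the all-ones $k\times k$ matrix), whose $\ell_1\to\ell_1$ norm is $2(k-1)/k$; hence your maps $r_{\mathcal P}$ have Lipschitz constants increasing to $2$, and no limiting procedure can bring the constant back down to $1$. The same example, written with three point masses, shows the defect survives the identification with measures. Worse, this is not repairable by swapping in a better finite-dimensional retraction: a norm-continuous $1$-Lipschitz retraction of $\ell_1^n=C(\{1,\dots,n\})^*$ onto its ball \emph{is} the finite-dimensional case of the very theorem you are proving, so invoking its existence would be circular, and the natural candidates (radial projection, truncation from above) also fail to be $1$-Lipschitz.

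The second gap you acknowledge yourself, but it must be said plainly that it is not a technical remainder --- it is the entire theorem. On the ball your $r_{\mathcal P}$ reduce to $D_{\mathcal P}$ and $D_{\mathcal P}\mu\to\mu$ weak$^*$; but for $\|\mu\|>1$ the thresholding is genuinely active, different partitions are in no way compatible with one another, and you give no argument that the net $(r_{\mathcal P}\mu)$ converges, nor could a pointwise weak$^*$ limit of weak$^*$-continuous maps be expected to remain weak$^*$-continuous (it generally is not; the preservation of the Lipschitz bound is the trivial half). The ``uniform equicontinuity estimate'' you invoke is exactly the missing construction, not a lemma one can defer. Finally, your plan to verify continuity sequentially on the balls $nB_{M(K)}$ can at best yield continuity for the bounded-weak$^*$ topology: weak$^*$-convergent \emph{nets} in $C(K)^*$ need not be norm-bounded, so continuity on every ball does not imply the global weak$^*$ continuity asserted in the statement. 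In sum, the discretization $D_{\mathcal P}$ is correct and standard, but the two features that make the theorem hard --- the sharp constant $1$ and the weak$^*$ continuity of a limit map --- are respectively contradicted and left unproved in your proposal.
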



\section{General results}

As said before, the compact Hausdorff space $K$ is given together with a lower semicontinuous metric $d$. Let us stress that $C(K)$ refers to the continuous real functions with respect to the compact topology, meanwhile $\lip(K)$ stands for the Lipschitz real functions with respect to the metric $d$. In general, those sets are not contained in one another, however the intersection is rich enough to recover either the topology or the metric. The following is our main definition.

\begin{defi}
A subspace $X \subset C(K)$ is said to be Lipschitz if all the elements of $X$ are Lipschitz functions, that is, if $X \subset \lip(K)$ as a subset.
\end{defi}

Even though the definition is quite clear, we will illustrate it with an example.

\begin{ejem}
Let $K=B_{\ell_2}$ be endowed with the weak topology of $\ell_2$ and let $d$ be the norm (Hilbert) metric. Consider the functions $f_n:K \rightarrow [0,1]$ defined by
$$ f_n((x_k)_{k \in {\Bbb N}}) = x_n^2. $$
Then the sequence $(f_n)$ spans a Lipschitz subspace of $C(K)$ isometric to $c_0$.
\end{ejem}

\begin{proof}
We will perform the computations, although the particular existence of isometric Lispchitz embeddings of $c_0$ into $C(K)$ can be deduced from Theorem \ref{iso_char} or Theorem \ref{cecero}.
Note that for any bounded sequence $(a_k)$ we have
$$ \sup\{ |a_k|: k \in {\Bbb N}\} \leq \sup \{ \sum_{k=1}^\infty a_k x_k^2: (x_k) \in K\} , $$
but 
$$ \sum_{k=1}^\infty a_k x_k^2 \leq \sup\{ |a_k|: k \in {\Bbb N}\} \, \sum_{k=1}^\infty x_k^2 \leq \sup\{ |a_k|: k \in {\Bbb N}\} $$
for $(x_k) \in K$. Therefore
$ \| \sum_{k=1}^\infty a_k f_k \|  = \| (a_k)\|_\infty .$
If $(a_k) \in c_0$ we get also the uniform convergence of the series, so it defines an element of $C(K)$.
As to the Lipschitzness, let $(x_k), (y_k) \in K$. Then
$$ \left| \sum_{k=1}^\infty a_k x_k^2 - \sum_{k=1}^\infty a_k y_k^2 \right| \leq \sum_{k=1}^\infty |a_k | \, |x_k+y_k| |x_k -y_k|$$
$$ \leq  \left( \sum_{k=1}^\infty |a_k |^2 |x_k+y_k|^2   \right)^{1/2}  \left(  \sum_{k=1}^\infty |x_k-y_k|^2 \right)^{1/2} \leq  2 \| (a_k)\|_\infty \, d( (x_k), (y_k))$$
as wished.
\end{proof}

The following simple application of Baire's theorem was written in similar terms in \cite{jonard_raja}, although it was only applied in the case $d$ metrizes $K$.

\begin{prop}\label{easy}
Let $X \subset C(K)$ be a non trivial linear subspace. Then either
\begin{itemize}
\item[(a)] $X \cap \lip(K)$ is of first category in $X$;
\item[(b)] or $X \subset \lip(K)$, that is, $X$ is a Lipschitz subspace of $C(K)$, and there exists 
$\lambda>0$ such that $L(f) \leq \lambda \|f\|$ for every $f \in X$.
\end{itemize}
\end{prop}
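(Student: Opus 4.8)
The plan is to run a category dichotomy on the natural exhaustion of $X \cap \lip(K)$ by the sublevel sets of the Lipschitz seminorm. First I would set, for each $n \in {\Bbb N}$,
$$ A_n = \{ f \in X : L(f) \leq n \} = \left\{ f \in X : |f(t_1) - f(t_2)| \leq n\, d(t_1,t_2) \text{ for all } t_1, t_2 \in K \right\}, $$
so that $X \cap \lip(K) = \bigcup_{n} A_n$. The point of writing $A_n$ through the pointwise inequalities is that, for each fixed pair $t_1, t_2$, the map $f \mapsto f(t_1) - f(t_2)$ is $\| \cdot \|_\infty$-continuous on $C(K)$; hence each single constraint cuts out a closed set, and $A_n$, being an intersection of such sets intersected with $X$, is closed in $X$. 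I would also record two trivial features that will do the real work later: each $A_n$ is convex and symmetric, and $L$ is positively homogeneous, $L(\alpha f) = |\alpha|\, L(f)$.

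Next comes the dichotomy. If every $A_n$ is nowhere dense in $X$, then $X \cap \lip(K) = \bigcup_n A_n$ is a countable union of nowhere dense sets, that is, of first category, which is alternative (a). So I may assume the contrary: some $A_{n_0}$ is not nowhere dense. Since $A_{n_0}$ is closed in $X$, this forces $A_{n_0}$ to have nonempty interior in $X$, so it contains a relative ball $\{ g \in X : \|g - f_0\| < r \}$ for some $f_0 \in X$ and some $r > 0$.

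The crux is to convert this off-centre ball into an estimate that is homogeneous in $f$. Here I would exploit convexity and symmetry together: by symmetry $A_{n_0}$ also contains the ball centred at $-f_0$, and for any $g \in X$ with $\|g\| < r$ the identity $g = \tfrac12\big((f_0 + g) + (-f_0 + g)\big)$ exhibits $g$ as a midpoint of two elements of $A_{n_0}$, whence $g \in A_{n_0}$ by convexity. Thus $\{ g \in X : \|g\| < r \} \subset A_{n_0}$, i.e. $L(g) \leq n_0$ whenever $\|g\| < r$. Finally, given any nonzero $f \in X$, applying this to $g = \frac{r}{2\|f\|} f$ and invoking homogeneity of $L$ yields $L(f) \leq \frac{2 n_0}{r}\, \|f\|$; in particular every $f \in X$ is Lipschitz, so $X$ is a Lipschitz subspace and alternative (b) holds with $\lambda = 2 n_0 / r$.

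The one point worth watching is the status of the ambient space. The argument above never actually invokes completeness of $X$: the dichotomy uses only the \emph{definition} of first category (a countable union of nowhere dense sets), not the Baire category theorem itself, so it applies to an arbitrary non trivial linear subspace. Consequently the substantive step is not the topological trichotomy but the homogenization — recentring a ball at the origin via convexity and symmetry and then rescaling — and that is where I would expect the care to be needed.
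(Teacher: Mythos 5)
Your proof is correct and takes essentially the same route as the paper: the same decomposition of $X \cap \lip(K)$ into the closed, convex, balanced sublevel sets $\{f \in X : L(f) \leq n\}$, the same category dichotomy, and the same homogeneity argument to get $L(f) \leq \lambda \|f\|$. The only difference is that you spell out details the paper leaves implicit (closedness of the sublevel sets via the pointwise constraints, and the recentering of the off-centre ball using convexity and symmetry), and your closing remark that no completeness of $X$ is needed is accurate, since the dichotomy never invokes Baire's theorem.
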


\noindent
\begin{proof} Observe that $X \cap \lip(K)=\bigcup_{n=1}^{\infty} \{f \in X: L(f) \leq n\}$ is a decomposition into countably many closed balanced convex sets. If $X \cap \lip(K)$ 
is not of first category in $X$, then there is $\delta>0$ such that $\delta B_{X} \subset  \{f \in X: L(f) \leq n\}$ for some $n \in {\Bbb N}$. 
By homogeneity, we have $L(f) \leq \lambda\|f\|$ with $\lambda=\delta^{-1}n$
for every $f \in X$. In particular $X \subset \lip(K)$.\end{proof}

\begin{prop}\label{lipschitz}
Let $J: X \rightarrow C(K)$ be an isomorphic embedding. Then $J(X) \subset \lip(K)$ if and only if $J^{*}|_K$ is Lipschitz from $d$ to the norm of $X^{*}$, where $J^*$ denotes the adjoint mapping from $C(K)^*$ into $X^*$. In such a case, there is $\delta>0$ such that 
$$ \delta B_{X^*}  \subset \cconv^{w^*}(J^*(K) \cup (- J^*(K) )) .$$ 
\end{prop}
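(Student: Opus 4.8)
The plan is to reduce both assertions to two elementary identifications. The crucial observation is that the evaluation functionals $\hat t \in C(K)^*$ (for $t \in K$) are carried by $J^*$ to the functionals $J^*(\hat t)\colon x \mapsto (Jx)(t)$ on $X$, so that the behaviour of $J^*|_K$ as a map from $(K,d)$ into $(X^*, \|\cdot\|)$ is governed entirely by the family $\{Jx : x \in B_X\}$.

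First I would establish the key identity
$$ L(J^*|_K) = \sup_{x \in B_X} L(Jx). $$
Indeed, for $t_1 \neq t_2$ one computes $\|J^*(\hat t_1) - J^*(\hat t_2)\|_{X^*} = \sup_{x \in B_X} |(Jx)(t_1) - (Jx)(t_2)|$, and then dividing by $d(t_1,t_2)$ and taking suprema over the pair $(t_1,t_2)$ and over $x$ --- which may be interchanged, being nested suprema --- yields the identity. From here the equivalence is immediate: if $J^*|_K$ is $d$-to-norm Lipschitz with constant $C$, then $L(Jx) \leq C\|x\|$ for every $x$, so each $Jx$ is Lipschitz and $J(X) \subset \lip(K)$; conversely, if $J(X) \subset \lip(K)$, then Proposition \ref{easy} furnishes $\lambda>0$ with $L(Jx) \leq \lambda\|Jx\| \leq \lambda\|J\|\,\|x\|$, whence $\sup_{x \in B_X} L(Jx) < \infty$ and $J^*|_K$ is Lipschitz.

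For the inclusion I would argue by polarity. Set $A = J^*(K) \cup (-J^*(K)) \subset X^*$, a symmetric set, so that $\cconv^{w^*}(A)$ is its $w^*$-closed absolutely convex hull and the bipolar theorem gives $\cconv^{w^*}(A) = A^{\circ\circ}$. The absolute polar of $A$ in $X$ is exactly
$$ A^\circ = \{x \in X : |(Jx)(t)| \leq 1 \text{ for all } t \in K\} = \{x \in X : \|Jx\|_\infty \leq 1\}. $$
Because $J$ is an isomorphic embedding there is $m>0$ with $\|Jx\| \geq m\|x\|$, hence $A^\circ \subset m^{-1}B_X$; taking absolute polars again reverses the inclusion, giving $mB_{X^*} = (m^{-1}B_X)^\circ \subset (A^\circ)^\circ = \cconv^{w^*}(A)$, so that $\delta = m$ works.

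I expect the only point requiring care to be the bookkeeping of the bipolar theorem for the pair $(X,X^*)$ under the topology $\sigma(X^*,X)$ --- in particular, checking that the convex hull of the symmetric set $A$ is automatically balanced, so that $\cconv^{w^*}(A)$ coincides with the $w^*$-closed absolutely convex hull to which the bipolar theorem applies. It is worth noting that the displayed inclusion in fact uses only that $J$ is bounded below, the Lipschitz hypothesis being irrelevant there; it is the boundedness of the polar set $A^\circ$, forced precisely by the lower estimate of the embedding, that produces the ball in the bipolar.
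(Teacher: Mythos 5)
Your proof is correct and takes essentially the same route as the paper's: the equivalence rests on the identification $J^*(\hat t\,)(x)=(Jx)(t)$ together with Proposition \ref{easy} applied to the closed subspace $J(X)$, and the inclusion follows from the bipolar theorem, your explicit computation of the polar $A^\circ=\{x\in X:\|Jx\|_\infty\leq 1\}$ being precisely a restatement of the paper's observation that $J^*(K)$ is a norming set. The only differences are cosmetic: you track the constants slightly more carefully (the factor $\|J\|$ in the converse direction, and $\delta=m$ the lower bound of the embedding) and you spell out the balancedness needed for the bipolar theorem.
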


\begin{proof} If $J^{*}|_K$ is Lipschitz, then any function $J(x)$ is Lipschitz as well, since 
 $J(x)(t)=J^*(t)(x)$. Reciprocally, assume that $J(X) \subset \lip(K)$.
By Proposition~\ref{easy} there is $\lambda >0$ such that $L(f) \leq \lambda \|f\|$
for every $f \in J(X)$. Now,  if $x \in B_X$ and $t_1,t_2 \in K$ then
$$ |J^{*}(t_1)(x) - J^{*}(t_2)(x)| = |J(x)(t_1)-J(x)(t_2)| \leq \lambda \, d(t_1,t_2) . $$
Taking supremum on $x \in B_X$ we get $\|J^{*}(t_1) - J^{*}(t_2)\| \leq \lambda \, d(t_1,t_2)$.\\
The formula
$$ |\!|\!| x |\!|\!| = \| J(x) \|_\infty = \sup\{ |J(x)(t)| : t \in K\} = \sup\{ |x^*(x)| : x^* \in J^*(K) \} $$
defines an equivalent norm on $X$, that is, $J^*(K)$ is a norming set. The bipolar theorem implies that the balanced $w^*$-closed convex hull of $J^*(K)$ is an equivalent norm, giving so the last statement. Moreover, some extra computations 
we could specify that $\delta = \|J^{-1}|_{J(X)}\|$.
\end{proof}

Now we will turn our attention to isometric embeddings. The set of extreme points of a convex subset $C$ is denoted by $\Ext(C)$.

\begin{lema}\label{lema}
Let $X, Y$ be Banach spaces and let $J: X \rightarrow Y$ be a linear operator. Then $J$ is an isometric embedding
if and only if
$$ \Ext(B_{X^{*}}) \subset J^*(\Ext(B_{Y^*})). $$
\end{lema}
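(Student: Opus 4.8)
My plan is to transfer the whole question to the dual unit balls. For every $x \in X$ one has
$$ \|Jx\| = \sup_{y^* \in B_{Y^*}} \langle Jx, y^* \rangle = \sup_{y^* \in B_{Y^*}} \langle x, J^* y^* \rangle = \sup_{x^* \in J^*(B_{Y^*})} \langle x, x^* \rangle , $$
so that $\|Jx\|$ is precisely the support function of the set $J^*(B_{Y^*})$. Since $J^*$ is weak$^*$-to-weak$^*$ continuous and $B_{Y^*}$ is weak$^*$-compact by Banach--Alaoglu, the image $J^*(B_{Y^*})$ is a weak$^*$-compact, convex and balanced subset of $X^*$; as such a set is determined by its support function, I obtain the clean reformulation that $J$ is an isometric embedding if and only if $J^*(B_{Y^*}) = B_{X^*}$. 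I would base both implications on this equality.

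For the direct implication, assume $J^*(B_{Y^*}) = B_{X^*}$ and fix $x^* \in \Ext(B_{X^*})$. The fibre $F = (J^*)^{-1}(x^*) \cap B_{Y^*}$ is nonempty (by surjectivity onto $B_{X^*}$), convex and weak$^*$-compact, hence by Krein--Milman it carries an extreme point $y_0^*$. The crucial step is to promote $y_0^*$ to an extreme point of the full ball: if $y_0^* = \tfrac12(u+v)$ with $u,v \in B_{Y^*}$, then $x^* = \tfrac12(J^*u + J^*v)$ with $J^*u, J^*v \in B_{X^*}$, so the extremality of $x^*$ forces $J^*u = J^*v = x^*$, i.e. $u,v \in F$; the extremality of $y_0^*$ inside $F$ then yields $u = v = y_0^*$. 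Therefore $x^* = J^*(y_0^*) \in J^*(\Ext(B_{Y^*}))$, which is the desired inclusion.

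For the converse I would invoke Bauer's maximum principle together with Krein--Milman to write $\|x\| = \max\{ |\langle x, x^* \rangle| : x^* \in \Ext(B_{X^*}) \}$ for every $x \in X$. Combining this with the hypothesis $\Ext(B_{X^*}) \subset J^*(\Ext(B_{Y^*})) \subset J^*(B_{Y^*})$ and the support-function identity of the first paragraph gives at once the lower estimate $\|x\| \leq \|Jx\|$. I expect the main obstacle to be the reverse inequality $\|Jx\| \leq \|x\|$, equivalently $J^*(B_{Y^*}) \subset B_{X^*}$: the inclusion of extreme points by itself only delivers $B_{X^*} = \cconv^{w^*}(\Ext(B_{X^*})) \subset J^*(B_{Y^*})$, so to reach the opposite containment I anticipate needing the sharper information that the relevant extreme functionals already lie in $B_{X^*}$, that is $J^*(\Ext(B_{Y^*})) \subset B_{X^*}$. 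Granting this, the weak$^*$-continuity of $J^*$ and the representation $B_{Y^*} = \cconv^{w^*}(\Ext(B_{Y^*}))$ propagate the bound to all of $J^*(B_{Y^*})$, and the first-paragraph reformulation closes the proof. Isolating and securing this upper control is the delicate point on which the whole converse hinges.
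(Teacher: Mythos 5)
Your first two paragraphs coincide exactly with the paper's own proof: the paper reduces the lemma to the equivalence ``$J$ is an isometric embedding iff $J^*(B_{Y^*})=B_{X^*}$'' (attributing the nontrivial inclusion to Hahn--Banach, where you use support functions and weak$^*$ separation --- the same tool), and then settles the forward implication by precisely the fibre argument you spell out: an extreme point of the $w^*$-compact convex set $(J^*)^{-1}(x^*)\cap B_{Y^*}$, which exists by Krein--Milman, must be extreme in $B_{Y^*}$.

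The obstacle you isolate in the converse is genuine, and it is a defect of the statement rather than of your argument: as written, the converse is false. Take $X={\Bbb R}$, $Y=\ell_\infty^2$ and $J(t)=(t,2t)$. Then $J^*:\ell_1^2\rightarrow{\Bbb R}$ is given by $J^*(a,b)=a+2b$, so
$$ J^*(\Ext(B_{Y^*}))=\{\pm 1,\pm 2\}\supset\{\pm 1\}=\Ext(B_{X^*}), $$
while $\|J(1)\|_\infty=2$, so $J$ is not an isometric embedding. Hence no amount of cleverness can close the step you flagged: the ``upper control'' you anticipated, namely $\|J\|\leq 1$ (equivalently $J^*(\Ext(B_{Y^*}))\subset B_{X^*}$, by the propagation through $B_{Y^*}=\cconv^{w^*}(\Ext(B_{Y^*}))$ that you sketched), is a missing hypothesis, not a derivable fact. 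The paper's own proof is silent on this point --- it only argues the forward direction --- and in the one place where the converse is invoked (Theorem~\ref{iso_char}) the operator $J(x)(t)=\Psi(t)(x)$ is first checked to satisfy $\|J\|\leq 1$, which holds there because $\Psi$ takes values in $B_{X^*}$. So the lemma should be read with the additional hypothesis $\|J\|\leq 1$; granting that, your argument is complete and is in fact a more careful rendering of the paper's, with the added merit of having located the imprecision in the statement.
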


\begin{proof}
Note that, in general, $J: X \rightarrow Y$ is an isometric embedding
if and only if $J^{*}(B_{Y^{*}})
=B_{X^{*}}$ (the less easy part relies on the Hahn--Banach theorem). 
Now, it is an easy exercise to prove that, for any $x^* \in \Ext(B_{X^*})$, any extreme point of the convex $w^*$-compact set $(J^*)^{-1}(x^*) \cap B_{Y^*}$ must be an extreme point in $B_{Y^*}$.
\end{proof}

\begin{theo}\label{iso_char}
There is an isometric embedding $J: X \rightarrow C(K)$ as a Lipschitz subspace if and only if there exists a mapping $\Psi: K \rightarrow B_{X^*}$ which is continuous for the weak$^*$ topology, Lipschitz for the metrics $d$-$\| \cdot \|$ and such that 
$$ \Ext(B_{X^*}) \subset \Psi(K) \cup (- \Psi(K)). $$
In such a case, $\Psi= J^*|_{K}$ and  $J(x)(t)=\Psi(t)(x)$.
\end{theo}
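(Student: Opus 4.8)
The plan is to recognize that the map $\Psi$ in the statement is forced to be the restriction of the adjoint $J^*$ to $K$ (identified with its canonical copy $\{\hat{t}: t \in K\}$ in $C(K)^*$, where $\hat{t}(f)=f(t)$), and then to read off both implications by combining Proposition~\ref{lipschitz} with Lemma~\ref{lema}. The one external ingredient I will invoke is the classical description of the extreme points of the dual ball of $C(K)$: via the Riesz representation theorem one has $\Ext(B_{C(K)^*}) = \{\pm \hat{t}: t \in K\}$. I will also use, as already observed earlier in the paper, that $t \mapsto \hat{t}$ is a homeomorphism of $K$ onto a $w^*$-compact subset of $B_{C(K)^*}$.

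First I would settle the forward implication. Assuming $J$ is an isometric embedding with $J(X) \subset \lip(K)$, I set $\Psi = J^*|_K$, so that $\Psi(t) = J^*(\hat{t})$. Evaluating gives $\Psi(t)(x) = \hat{t}(J(x)) = J(x)(t)$, which is the announced formula; and since $\|J^*\| = \|J\| = 1$ and $\|\hat{t}\| = 1$, we obtain $\|\Psi(t)\| \leq 1$, i.e. $\Psi$ lands in $B_{X^*}$. Weak$^*$-continuity of $\Psi$ is immediate because $t \mapsto \Psi(t)(x) = J(x)(t)$ is continuous for each fixed $x$ (as $J(x) \in C(K)$); equivalently, $\Psi$ is the composition of $t \mapsto \hat{t}$ with the $w^*$-to-$w^*$ continuous map $J^*$. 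That $\Psi$ is $d$-to-$\|\cdot\|$ Lipschitz is exactly the content of Proposition~\ref{lipschitz}. Finally, Lemma~\ref{lema} yields $\Ext(B_{X^*}) \subset J^*(\Ext(B_{C(K)^*})) = \{\pm J^*(\hat{t}): t \in K\} = \Psi(K) \cup (-\Psi(K))$, the required inclusion.

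For the converse I would define $J: X \rightarrow C(K)$ by $J(x)(t) = \Psi(t)(x)$. Each $J(x)$ is continuous because $\Psi$ is $w^*$-continuous, and it is Lipschitz with $L(J(x)) \leq L(\Psi)\,\|x\|$ via the estimate $|\Psi(t_1)(x) - \Psi(t_2)(x)| \leq \|\Psi(t_1) - \Psi(t_2)\|\,\|x\| \leq L(\Psi)\, d(t_1,t_2)\, \|x\|$; hence $J$ is a well-defined bounded linear map into $\lip(K) \cap C(K)$. Its adjoint satisfies $J^*(\hat{t})(x) = J(x)(t) = \Psi(t)(x)$, so once again $J^*|_K = \Psi$, and therefore $J^*(\Ext(B_{C(K)^*})) = \Psi(K) \cup (-\Psi(K)) \supseteq \Ext(B_{X^*})$. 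Applying Lemma~\ref{lema} in the reverse direction shows that $J$ is an isometric embedding, while $J(X) \subset \lip(K)$ has already been checked.

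I expect the argument to be essentially an assembly of the two previous results once the identification $J^*|_K = \Psi$ is in place, with no genuinely hard step. The point requiring the most care is the use of the extreme-point structure $\Ext(B_{C(K)^*}) = \{\pm \hat{t}\}$, since this is precisely what channels the specific nature of $C(K)$ into the abstract criterion of Lemma~\ref{lema}; in particular, one should note that the hypothesis only supplies the one-sided inclusion $\Ext(B_{X^*}) \subset \Psi(K) \cup (-\Psi(K))$ rather than an equality, which is exactly the form Lemma~\ref{lema} consumes.
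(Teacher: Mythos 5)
Your proposal is correct and follows essentially the same route as the paper's proof: in both directions you identify $\Psi$ with $J^*|_K$, invoke Proposition~\ref{lipschitz} for the Lipschitz condition, and feed the classical fact $\Ext(B_{C(K)^*})=\{\pm\hat{t}: t\in K\}$ into Lemma~\ref{lema}, exactly as the paper does. Your write-up merely spells out details (weak$^*$-continuity, boundedness of $J$, the computation $J^*(\hat{t})=\Psi(t)$) that the paper leaves implicit.
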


\begin{proof}
Let us call $J: X \rightarrow C(K)$ the isometric embedding. By Proposition \ref{lipschitz} we already know that $\Psi =J^*|_K$ is Lipschitz, it is obviously continuous from $K$ to the $w^*$-topology and satisfies the required condition by Lemma \ref{lema} since $\Ext(B_{C(K)^*})=K \cup (-K)$. For the other implication, consider the proposed formula $J(x)(t)=\Psi(t)(x)$. Evidently, $J$ is a linear operator with $\|J\| \leq 1$ that satisfies $J^{*}|_K=\Psi$. Lemma \ref{lema} implies that $J$ is an isometric embedding of $X$ into $C(K)$ as a Lipschitz subspace.
\end{proof}

The idea behind the following result was used by Donoghue \cite{donoghue}  for the construction of Peano-type filling curves. 

\begin{coro}\label{dona}
Let $X$ be a Gâteaux smooth Banach space and let $J: X
\rightarrow C(K)$ be an isometric embedding. Then
$$ S_{X^{*}} \subset J^{*}(K) \cup (- J^{*}(K)). $$
Moreover, if $X$ is infinite-dimensional, then
$$ B_{X^{*}} = J^{*}(K) \cup (- J^{*}(K)). $$
\end{coro}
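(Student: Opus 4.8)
The plan is to combine the extreme-point description coming from Lemma~\ref{lema} with two classical facts: the Bishop--Phelps density theorem for the first inclusion, and the $w^*$-density of the dual sphere in the dual ball for the equality (this is where infinite-dimensionality enters). Write $C = J^*(K) \cup (-J^*(K))$. Since $K$ is $w^*$-compact in $C(K)^*$ (as the set of Dirac functionals) and $J^*$ is $w^*$-to-$w^*$ continuous, the set $C$ is $w^*$-compact, hence both $w^*$-closed and norm-closed; moreover $C \subseteq B_{X^*}$ because $\|J^*\| = \|J\| = 1$. As $\Ext(B_{C(K)^*}) = K \cup (-K)$, Lemma~\ref{lema} yields $\Ext(B_{X^*}) \subseteq J^*(\Ext(B_{C(K)^*})) = C$.

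For the inclusion $S_{X^*} \subseteq C$, I would first observe that Gâteaux smoothness forces every norm-attaining functional of norm one to be an extreme point of $B_{X^*}$. Indeed, if $f \in S_{X^*}$ attains its norm at some $x \in S_X$, then $f$ is the unique support functional of $x$; writing $f = \tfrac12(g+h)$ with $g,h \in B_{X^*}$ forces $g(x) = h(x) = 1$, so $g$ and $h$ are also norm-one support functionals at $x$, whence $g = h = f$ by uniqueness. Thus the norm-attaining unit functionals lie in $\Ext(B_{X^*}) \subseteq C$. By the Bishop--Phelps theorem the norm-attaining functionals are norm-dense in $X^*$, and normalising them shows the norm-attaining unit functionals are norm-dense in $S_{X^*}$. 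Since $C$ is norm-closed, taking norm-closures gives $S_{X^*} \subseteq C$.

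For the equality when $X$ is infinite-dimensional, the remaining point is that the open ball is absorbed into $C$, and here the plan is to prove that $S_{X^*}$ is $w^*$-dense in $B_{X^*}$. Given $f_0 \in B_{X^*}$ with $\|f_0\|<1$ and a basic $w^*$-neighbourhood determined by $x_1,\dots,x_n \in X$ and $\varepsilon>0$, infinite-dimensionality of $X^*$ guarantees a nonzero $h \in X^*$ annihilating $x_1,\dots,x_n$, since their common kernel has finite codimension. Then $(f_0+th)(x_i)=f_0(x_i)$ for all $t$, while $t \mapsto \|f_0 + th\|$ is continuous, equals $\|f_0\|<1$ at $t=0$ and tends to $+\infty$; so by the intermediate value theorem some $f_0 + t_0 h \in S_{X^*}$ lies in the prescribed neighbourhood. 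Hence $\overline{S_{X^*}}^{w^*} = B_{X^*}$, and since $S_{X^*} \subseteq C$ with $C$ being $w^*$-closed we conclude $B_{X^*} = \overline{S_{X^*}}^{w^*} \subseteq C \subseteq B_{X^*}$, that is $B_{X^*} = C$.

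The routine verifications (that the Dirac functionals form a $w^*$-compact set, that adjoints are $w^*$-continuous, and the elementary estimates in the density arguments) are standard. I expect the genuinely delicate step to be the second one: it is exactly where infinite-dimensionality is indispensable, since in finite dimensions $S_{X^*}$ is already closed and the conclusion $B_{X^*}=C$ fails, as the Euclidean plane shows. Keeping track of which closure governs $C$ at each stage---norm-closure for the first inclusion, $w^*$-closure for the equality---is the main thing to get right.
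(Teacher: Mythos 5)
Your proof is correct and follows essentially the same route as the paper: Lemma~\ref{lema} plus the Arens--Kelley identification $\Ext(B_{C(K)^*})=K\cup(-K)$, the observation that G\^ateaux smoothness makes norm-attaining unit functionals extreme (via uniqueness of support functionals), Bishop--Phelps with norm-closedness of $J^*(K)\cup(-J^*(K))$ for the first inclusion, and $w^*$-density of $S_{X^*}$ in $B_{X^*}$ with $w^*$-closedness for the equality. The only difference is cosmetic: you spell out the $w^*$-density argument in infinite dimensions, which the paper simply asserts.
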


\begin{proof} 
The idea is to use the fact that if $X$ is Gâteaux smooth, then $\Ext(B_{X^*})$ contains the set of norm-one attaining functionals 
$$N\hspace{-3pt}A = \{ x^* \in  S_{X^{*}} : \exists x \in S_X, x^*(x)=1 \}. $$
This result is a folklore to us, but we were not able to find a reference, so we will provide a full proof.
Let $x^* \in N\hspace{-3pt}A$ that attains its norm at $x$.
Shmulyan characterization of Gâteaux smoothness \cite[Corollary 7.22]{banach}, implies that for every $x \in S_X$, its norm attaining functional $x^* \in S_{X^*}$ given by the Hahn-Banach theorem is unique.
Suppose that $x^*=(y^*+z^*)/2$ with $y^*,z^* \in B_{X^*}$. Then $y^*(x), z^*(x) \leq 1$, but $y^*(x)+z^*(x)=2$, That implies $y^*(x)=z^*(x)=1$, and thus $y^*=z^*=x^*$, proving so that $x^* \in \Ext(B_{X^*})$.
The Bishop-Phelps Theorem \cite[Theorem~7.41]{banach}, says that $N\hspace{-3pt}A$ is norm dense in $S_{X^*}$, and therefore 
$\Ext(B_{X^*})$ is dense too.
Now, note that the $w^*$-compactness of $J^{*}(K)$ implies that it is norm closed and so it is $J^{*}(K) \cup (- J^{*}(K))$. The combination with the previous fact gives that 
$$S_{X^*} \subset J^{*}(K) \cup (- J^{*}(K)).$$ 
For the second statement, just observe that $S_{X^*} $ is weak$^*$-dense in $B_{X^*} $ when
$X$ is infinite-dimensional.
\end{proof}

Let us show how Corollary \ref{dona} can produce filling curves. If $K=[0,1]$ and $X$ has dimension bigger than $2$, then 
$J^*([0,1])$ and its antipodal set $-J^*([0,1])$ cover $ S_{X^{*}}$, that has dimension $2$ or bigger as a manifold. Suppose that $X={\Bbb R}^3$ with the Euclidean norm. Then $J^*([0,1])$ has nonempty interior relative to ${\Bbb S}^2$, the $2$-dimensional sphere, so by Baire's theorem, it contains a patch homeomorphic to a circle (or the square, equivalently) that is totally covered by $J^*([0,1])$. In order to get the circle filled with the image of $[0,1]$ we may take a suitable projection of $B_{{\Bbb R}^3}$ onto a plane, such that the patch goes to a circle, and then use a retraction of the plane onto the circle (the radial retraction, for instance) to finish the work.\\

However, this method to produce Peano-type filling curves does not skip the topological difficulties: they are simply hidden. The proof of the Banach-Mazur requieres these two facts: any metrizable compact is an onto continuous image of the Cantor set; and, continuos mappings from the Cantor set, identified as a subset of $[0,1]$, to ${\Bbb R}$ or, more generally, to a Banach space, can be continuously extended to the domain $[0,1]$ with values in the closed convex hull (of the former range). Both facts together produce a space filling curve in this way: put the square as a continuos image of the Cantor set and extend the mapping continuously to $[0,1]$.


\section{When $d$ metrizes $K$}

The following result is essentially a folklore, although with different variations (see \cite[Exercise 2.59]{banach}, for instance). 
For an interesting version involving vector-valued functions, along with several other beautiful applications of the Baire theorem to subspaces of C(K) made up of regular functions, we refer to \cite{Godefroy}.

\begin{theo}
If $d$ metrizes $K$, then all the Lipschitz subspaces of $C(K)$ are finite dimensional.
\end{theo}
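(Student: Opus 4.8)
The plan is to show that the closed unit ball of any Lipschitz subspace $X \subset C(K)$ is norm-compact and then invoke Riesz's theorem. The whole argument rests on the hypothesis that $d$ metrizes $K$: this makes $(K,d)$ a compact metric space, so that being $d$-Lipschitz with a uniformly controlled constant is exactly an equicontinuity condition with respect to the topology of $K$ itself.

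First I would feed $X$ into Proposition~\ref{easy}. Since $X$ is a Lipschitz subspace, alternative (b) applies and yields $\lambda>0$ with $L(f) \leq \lambda \|f\|_\infty$ for every $f \in X$. Hence the unit ball $B_X=\{f \in X: \|f\|_\infty \leq 1\}$ is contained in
$$ \mathcal{F} = \{ f \in C(K): \|f\|_\infty \leq 1,\ L(f) \leq \lambda \}. $$
The set $\mathcal{F}$ is uniformly bounded, and since $|f(t_1)-f(t_2)| \leq \lambda\, d(t_1,t_2)$ for every $f \in \mathcal{F}$ and $d$ generates the (compact) topology of $K$, it is also equicontinuous. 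The Arzelà--Ascoli theorem then gives that $\mathcal{F}$ is relatively compact in $(C(K),\|\cdot\|_\infty)$; in fact it is compact, since both defining conditions pass to uniform limits (the constraint on $L$ using its lower semicontinuity).

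It follows that $B_X$ sits inside the compact set $\mathcal{F}$, so the closure $\overline{X}$ of $X$ in $C(K)$ has a norm-compact unit ball, namely $\overline{B_X}$, a closed subset of $\mathcal{F}$. By Riesz's theorem (see \cite{banach}), $\overline{X}$ is finite-dimensional, and hence so is $X$. The one point deserving care is simply locating where the hypothesis is used: the passage from $d$-equicontinuity to equicontinuity for the topology of $K$ is legitimate precisely because $d$ metrizes $K$. When $d$ is strictly finer this step breaks down --- in harmony with the infinite-dimensional copy of $c_0$ exhibited earlier, where $d$ is the norm metric while $K$ carries the coarser weak topology of $\ell_2$.
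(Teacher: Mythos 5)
Your proof is correct and follows essentially the same route as the paper: Proposition~\ref{easy} gives the uniform bound $L(f)\leq\lambda\|f\|_\infty$ on $X$, hence equicontinuity of $B_X$ (using that $d$ metrizes $K$), then Arzel\`a--Ascoli yields norm compactness and Riesz's theorem finishes. The only difference is cosmetic: you pass to $\overline{X}$ and check that its unit ball is $\overline{B_X}$, which is a slightly more careful treatment of the possible non-closedness of $X$ than the paper's appeal to completeness of $B_X$.
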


\begin{proof}
If $X$ is a Lipschitz subspace, then $B_X$ is a bounded and complete set of functions of $C(K)$. By Proposition \ref{easy} we also know that $B_X$ is equicontinuous. Therefore, by Ascoli's theorem, $B_X$ is norm compact and so $X$ is finite dimensional.
\end{proof}

On the other hand, if $d$ does not metrize $K$, there $C(K)$ contains infinite dimensional Lipschitz subspaces.

\begin{theo}\label{cecero}
If the topology generated by $d$ is strictly finer than the topology of $K$, then $C(K)$ contains a Lipschitz subspace isometric to $c_0$.
\end{theo}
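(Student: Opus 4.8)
The plan is to convert the hypothesis into a single metric obstruction and then feed it into the characterisation Theorem~\ref{iso_char}. Since $d$ generates a finer topology and a continuous bijection from a compact space onto a Hausdorff space is a homeomorphism, the identity from $(K,d)$ to $(K,\tau)$ would be a homeomorphism if $(K,d)$ were compact, forcing the two topologies to coincide; as they do not, $(K,d)$ is not compact. Being complete, it cannot be totally bounded, so there are $\epsilon>0$ and a sequence $(t_n)$ with $d(t_n,t_m)\ge\epsilon$ whenever $n\ne m$. This $\epsilon$-separated sequence is the only combinatorial input I will use.

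To produce the copy of $c_0$ I would build a map $\Psi\colon K\to B_{\ell_1}=B_{c_0^*}$ that is weak$^*$-continuous, Lipschitz from $d$ to $\|\cdot\|_1$, and meets every extreme point of $B_{c_0^*}$; since $\Ext(B_{c_0^*})=\{\pm e_n:n\in\mathbb N\}$, Theorem~\ref{iso_char} then yields the isometric Lipschitz embedding $J(a)=\sum_n a_n\,\Psi(\cdot)_n$ of $c_0$. The natural prescription is $\Psi(t_n)=e_n$. On the $\tau$-closed set $\overline{\{t_n:n\in\mathbb N\}}$ I would put $\Psi(t_n)=e_n$ and $\Psi(z)=0$ at every cluster point $z$. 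This choice is forced and consistent: if $t_{m_\alpha}\to z$ with $z$ not one of the $t_n$, then the indices escape every finite set, so $e_{m_\alpha}\to0$ weak$^*$ (because $\langle e_m,a\rangle=a_m\to0$ for each $a\in c_0$), which both makes $\Psi$ weak$^*$-continuous on the closure and, when $t_n$ is moreover relatively isolated, continuous at the peaks. The Lipschitz estimate pits $\|e_n-e_m\|_1=2$ against $d(t_n,t_m)\ge\epsilon$ (harmless) and $\|e_n-0\|_1=1$ against $d(t_n,z)$ for cluster points $z$ (the delicate case).

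The decisive step is to extend $\Psi$ from $\overline{\{t_n\}}$ to all of $K$ while keeping simultaneously weak$^*$-continuity, the Lipschitz constant, and the constraint of landing in $B_{\ell_1}$. Here the scalar interpolation of Theorem~\ref{interpol} does not suffice on its own: extending the coordinates $\Psi(\cdot)_n$ separately neither keeps the image inside $\ell_1$ nor controls $\sum_n|\Psi(t)_n|\le1$. What is needed is a bitopological, convex-valued extension into the weak$^*$-metrizable ball $B_{\ell_1}$, which I would obtain by combining a continuous extension into the convex compact $B_{\ell_1}$ with a weak$^*$-continuous, Lipschitz-one retraction onto $B_{\ell_1}$ of Benyamini's type (the last result quoted in Section~2), projecting a cheaper Lipschitz extension back onto the ball without increasing the Lipschitz constant. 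Once $\Psi\colon K\to B_{\ell_1}$ is available, $\{\pm e_n\}=\Ext(B_{c_0^*})\subset\Psi(K)\cup(-\Psi(K))$ and Theorem~\ref{iso_char} closes the argument.

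The main obstacle is exactly the tension exposed above. Lower semicontinuity only gives $d(\lim_\alpha s_\alpha,\cdot)\le\liminf_\alpha d(s_\alpha,\cdot)$, so passing to $\tau$-limits can only \emph{shrink} $d$-distances; consequently a $\tau$-cluster point $z$ of the separated sequence may be $d$-close to a peak $t_n$, and then no weak$^*$-continuous $d$-Lipschitz map can reconcile the value $\Psi(z)=0$ forced by continuity with the value $\Psi(z)\approx e_n$ demanded by the Lipschitz bound. I would neutralise this by passing to a subsequence along which the peaks stay $d$-bounded away from the cluster set $\Lambda$ of $(t_n)$: because $\epsilon$-separation makes each point of $\Lambda$ $d$-close to at most one peak, when $\Lambda$ is $d$-totally bounded only finitely many peaks are endangered and may be discarded, and the general case is reduced to this one by restricting to $\overline{\{t_n\}}$ and iterating inside $\Lambda$. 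Guaranteeing that this $d$-separation from the cluster set can always be arranged, together with carrying out the vector-valued bitopological extension, is where the real work lies.
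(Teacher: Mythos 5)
Your opening reduction (completeness of $(K,d)$ plus non-compactness gives an $\varepsilon$-separated sequence) is exactly the paper's starting point, and your diagnosis of the two obstructions is accurate --- but neither is resolved, and the fix you propose for the first one does not work as described. The route through Theorem~\ref{iso_char} needs one map $\Psi\colon K\to B_{\ell_1}$ that is simultaneously weak$^*$-continuous and $d$-Lipschitz. Extending the coordinates $\Psi(\cdot)_n$ one at a time by Theorem~\ref{interpol} only produces a map into $[-1,1]^{\Bbb N}=B_{\ell_\infty}$; its values need not lie in $\ell_1$ at all (the sums $\sum_n|\Psi(t)_n|$ can be infinite), so there is nothing for a Benyamini-type retraction to act on: the theorem quoted in Section~2 retracts $C(K)^*$ onto $B_{C(K)^*}$, it does not send $B_{\ell_\infty}$ into $B_{\ell_1}$, and $c_0$ is not a $C(K)$ space anyway, so even invoking it for $B_{c_0^*}$ would require passing through $\ell_1\cong c^*$ and then the relevant weak$^*$ topology is $\sigma(c^*,c)$, under which $e_n\not\to 0$. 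The second gap is the cluster-set problem you flag at the end: when $K$ is not sequentially compact, the cluster set $\Lambda$ of $(t_n)$ can be large and every peak can be $d$-close to some point of $\Lambda$; your ``iterate inside $\Lambda$'' sketch has no termination argument, and this is precisely where the paper needs genuine extra input, namely the dichotomy of Section~5: if $K$ is fragmentable by $d$ it is sequentially compact (Namioka), and if not, Lemma~\ref{lema_cantor} provides a Cantor scheme of uniformly $d$-separated sets with no reference to cluster points of any sequence.

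It is worth seeing how the paper's proof avoids your first obstruction entirely: it never goes through Theorem~\ref{iso_char}. In the sequentially compact case it passes to a subsequence converging to a single point $t_0$, discards the at most one peak with $d(t_n,t_0)<3\varepsilon$ (this is what the $6\varepsilon$-separation is for), uses lower semicontinuity to show that the sets $B[t_0,\varepsilon]\cup\bigcup_{k\geq n}B[t_k,\varepsilon]$ are closed, and then by normality builds open sets $U_n\supset B[t_n,\varepsilon]$ with pairwise disjoint closures. Theorem~\ref{interpol} then gives \emph{scalar} continuous $\varepsilon^{-1}$-Lipschitz bumps $f_n$ with $f_n(t_n)=1$ and $f_n=0$ off $U_n$. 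Disjointness of supports makes everything immediate: at any point at most one $f_n$ is nonzero, so $\|\sum_n a_nf_n\|_\infty=\|(a_n)\|_\infty$, and any two points of $K$ meet at most two supports, so $L(\sum_n a_nf_n)\leq 2\varepsilon^{-1}\|(a_n)\|_\infty$. If you insist on your dual formulation, note that $\Psi(t)=\sum_n f_n(t)e_n$ then lands in $B_{\ell_1}$ for free --- but at that point the disjoint-support construction has already done all the work and Theorem~\ref{iso_char} is superfluous. As it stands, your proposal is a correct identification of the difficulties rather than a proof.
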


\begin{proof}[Restricted proof.]
We will assume here that $K$ is metrizable, or more generally, sequentially compact. The full proof will be provided in the next section.
Since the topology generated by $d$ cannot be compact, there is $\varepsilon>0$ and a sequence $(t_n) \subset K$ such that $d(t_n,t_m) > 6\varepsilon$ for every $n \not = m$. We may assume that the sequence is converging to some $t_0 \in K$, and removing one more element if necessary, we may assume that $d(t_n,t_0) \geq 3\varepsilon$. Suppose we pick $s_n \in B[t_n,\varepsilon]$ for infinitely many $n \in {\Bbb N}$ and let $s \in K$ a cluster point of $(s_n)$. Lower semicontinuity of $d$ implies $d(t_0,s) \leq \varepsilon$ and thus any set of the form
$$ B[t_0,\varepsilon] \cup \bigcup_{k=n}^\infty B[t_k,\varepsilon] $$
is closed for every $n \in {\Bbb N}$. Take disjoint open sets $U_1$ and $V_1$ such that $B[t_1,\varepsilon] \subset U_1$ and
$$ B[t_0,\varepsilon] \cup \bigcup_{k=2}^\infty B[t_k,\varepsilon] \subset V_1$$
Take now disjoint open sets $U_2,V_2$ with $ \overline{U_2},  \overline{V_2} \subset V_1$ and such that $B[t_2,\varepsilon] \subset U_2$ and 
$$ B[t_0,\varepsilon] \cup \bigcup_{k=3}^\infty B[t_k,\varepsilon] \subset V_2 . $$
Following in this way we will get a sequence of open sets $(U_n)$ such that $B[t_n,\varepsilon] \subset U_n$ and the sequence 
$(\overline{U_n})$ is pairwise disjoint. Applying Theorem \ref{interpol} there is a continuous function $f_n : K \rightarrow [0,1]$ such that $f(t_n)=1$, $f_n|_{K \setminus U_n}=0$ and $f_n$ is Lipschitz with constant at most $\varepsilon^{-1}$.\\
For any $(a_n) \in c_0$, the series $\sum_{n=1}^\infty a_n f_n$ is uniformly convergent on $K$ and so define a continuous function $f$. Note that $f$ is Lipschitz with constant no larger than $2\varepsilon^{-1} \| (a_n) \|_\infty$. Therefore, the mapping 
$$ (a_n) \rightarrow \sum_{n=1}^\infty a_n f_n $$
defines an isometry of $c_0$ into a Lipschitz subspace of $C(K)$.
\end{proof}

The combination of the two previous results of this section gives the following.

\begin{coro}
The lower semicontinuous metric $d$ metrizes $K$ if and only if all the Lipschitz subspaces of $C(K)$ are finite dimensional.
\end{coro}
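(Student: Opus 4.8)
The plan is to read the corollary as a direct combination of the two theorems that precede it, so that the whole task reduces to bookkeeping about the two topologies on $K$. The first thing I would record is the dichotomy supplied by part~(a) of the opening theorem of Section~2: the topology generated by $d$ is always finer than the topology of $K$. Hence there are only two possibilities --- either the two topologies coincide (which is precisely what ``$d$ metrizes $K$'' means) or the $d$-topology is strictly finer. In particular, the negation of ``$d$ metrizes $K$'' is exactly the assertion ``the $d$-topology is strictly finer than that of $K$''.

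With this equivalence in hand, I would establish the two implications separately. The forward direction, ``$d$ metrizes $K$ implies every Lipschitz subspace of $C(K)$ is finite dimensional'', is nothing but the first theorem of this section, so it requires no further argument. For the converse I would proceed by contraposition: assuming that $d$ does not metrize $K$, the dichotomy just recorded forces the $d$-topology to be strictly finer, whereupon Theorem~\ref{cecero} produces a Lipschitz subspace of $C(K)$ isometric to $c_0$. Since $c_0$ is infinite dimensional, this exhibits an infinite dimensional Lipschitz subspace, which is exactly what the contrapositive demands, and the equivalence is complete.

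I do not expect any genuine obstacle here, since all of the analytic content already lives in the two theorems being combined. The only point deserving a moment's attention is the elementary observation used to pass between ``does not metrize'' and ``strictly finer'': it rests entirely on the fact that, by construction, the $d$-topology is never coarser than the topology of $K$, so that any discrepancy between the two topologies is automatically a strict refinement. Once this is noted, the corollary follows with no computation.
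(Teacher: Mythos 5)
Your proposal is correct and follows exactly the paper's intended argument: the corollary is stated there as ``the combination of the two previous results of this section,'' namely the Ascoli-based theorem for the case where $d$ metrizes $K$ and Theorem~\ref{cecero} for the strictly finer case, with the same elementary dichotomy (coming from the fact that the $d$-topology is always finer) bridging the two. Nothing to add.
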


A linear subspace $A$ of a Banach space $X$ is said to be {\it spaceable} if it contains an infinite-dimensional closed subspace of $X$. Therefore, the last corollary can be reformulated as $\lip(K) \cap C(K)$ is spaceable (in $C(K)$) if and only if $d$ does not metrizes $K$.\\

The dual point of view of isometric embeddings is useful even in finite dimension.

\begin{prop}
A finite dimensional polyhedral space $X$ embeds isometrically into $\ell_\infty^n$ if and only if $2n$ is not lesser than the number of faces of $B_X$.
\end{prop}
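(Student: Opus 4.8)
The plan is to identify $\ell_\infty^n$ with $C(K)$ for the $n$-point discrete space $K=\{1,\dots,n\}$ and then read off the answer from Theorem \ref{iso_char}. On a finite set (equipped, say, with the discrete metric) every metric is bounded and lower semicontinuous and every function is both continuous and Lipschitz, so the Lipschitz restriction is vacuous and Theorem \ref{iso_char} applies with no metric constraint. It asserts that an isometric embedding $J:X\to \ell_\infty^n$ exists if and only if there is a map $\Psi:\{1,\dots,n\}\to B_{X^*}$ with
$$\Ext(B_{X^*})\subset \Psi(K)\cup(-\Psi(K)),$$
the $w^*$-continuity and Lipschitzness being automatic on a discrete set. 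Writing $\Psi(i)=x_i^*$, this is the familiar statement that $B_{X^*}=\cconv^{w^*}\{\pm x_1^*,\dots,\pm x_n^*\}$ for suitable functionals $x_i^*$.

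Next I would translate the extremal set into the combinatorics of $B_X$. Since $X$ is finite-dimensional and polyhedral, $B_X$ is a centrally symmetric polytope, hence so is its polar $B_{X^*}$; thus $\Ext(B_{X^*})$ is exactly the finite set of vertices of $B_{X^*}$. Polar duality matches these vertices bijectively with the facets (the maximal proper faces) of $B_X$: each facet lies in a hyperplane $\{x^*=1\}$ and the associated functional $x^*$ is precisely a vertex of $B_{X^*}$. Consequently $|\Ext(B_{X^*})|$ equals the number of faces of $B_X$. Central symmetry makes $\Ext(B_{X^*})$ a union of antipodal pairs $\{x^*,-x^*\}$, and no vertex is its own antipode (a vertex is nonzero), so the number of faces is even, say $2p$, carved into $p$ honest antipodal pairs.

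It then remains to count. For the direct implication, the set $\Psi(K)\cup(-\Psi(K))$ is symmetric with at most $2n$ points, so the displayed inclusion forces $2p=|\Ext(B_{X^*})|\le 2n$, i.e. $2n$ is not less than the number of faces. For the converse, if $2n\ge 2p$ I would pick one representative $x_i^*$ from each of the $p$ antipodal pairs of vertices and set the remaining $n-p$ functionals equal to $0$ (or to $x_1^*$); then $\Psi(K)\cup(-\Psi(K))\supseteq \Ext(B_{X^*})$ and Theorem \ref{iso_char} produces the embedding. The only genuinely delicate point is the polar-duality bookkeeping of the second paragraph: one must confirm that ``faces'' means facets, that $B_{X^*}$ is again a polytope, and that central symmetry yields exactly $p$ antipodal pairs of vertices, the factor $2$ thereby accounting for the $2n$ in the statement. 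Everything else reduces to a cardinality comparison.
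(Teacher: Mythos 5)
Your proof is correct and follows essentially the same route as the paper: identify $\ell_\infty^n$ with $C(K)$ for the $n$-point discrete space, observe that the continuity and Lipschitz requirements become vacuous, and apply Theorem \ref{iso_char} with a map $\Psi$ that selects one representative from each antipodal pair of extreme points of $B_{X^*}$ and repeats a value on the leftover indices. You are in fact somewhat more thorough than the paper, whose written proof only carries out the sufficiency direction ($n \geq N$ implies the embedding exists): your cardinality argument for the necessity, and your polar-duality justification that the number of faces of $B_X$ equals $|\Ext(B_{X^*})|$, are both left implicit there.
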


\begin{proof}
Let $2N$ be the number of faces of $B_X$ and let $\Ext(B_{X^*}) = \{ \pm x^*_1,\dots, \pm x^*_N \}$ and 
$K = \{1,\dots,n\}$.
Our hypothesis says that $n \geq N$. Define $\Psi: K \rightarrow \Ext(B_{X^*})$ by $\Psi(i)=x_i^*$ for $i \leq N$ and 
$\Psi(i)=x_N^*$ otherwise. Evidently, $\Ext(B_{X^*}) = \Psi(K) \cup (-\Psi(K))$, so we can apply Theorem~\ref{iso_char} to
produce an isometric embedding $J: X \rightarrow C(K)=\ell_\infty^n$ defined by $J(x)(i)=\Psi(i)(x)$.
\end{proof}

For the remaining results of this section, the hypothesis `metric compact' stresses the fact that $K$ is metrized by $d$.

\begin{prop}[\cite{jonard_raja}]\label{poli}
If $K$ is an infinite metric compact space, then $C(K)$ contains an isometric copy of  any finite-dimensional polyhedral space consisting of Lipschitz functions .
\end{prop}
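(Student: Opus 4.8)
The plan is to invoke the isometric characterization of Theorem \ref{iso_char}. Since $X$ is finite-dimensional and polyhedral, its dual ball has only finitely many extreme points; write $\Ext(B_{X^*}) = \{\pm x_1^*, \dots, \pm x_N^*\}$ with each $\|x_i^*\| = 1$. By Theorem \ref{iso_char} it suffices to produce a map $\Psi: K \rightarrow B_{X^*}$ that is weak$^*$-continuous (equivalently, since $X^*$ is finite-dimensional, norm-continuous), Lipschitz from $d$ to $\|\cdot\|$, and whose image contains $\{x_1^*,\dots,x_N^*\}$, so that $\Psi(K) \cup (-\Psi(K)) \supseteq \Ext(B_{X^*})$.

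First I would pick $N$ distinct points $p_1,\dots,p_N \in K$, which is possible because $K$ is infinite, and set $\delta = \tfrac{1}{2}\min_{i\neq j} d(p_i,p_j) > 0$. Next I would build Lipschitz ``bumps'' $\phi_i: K \rightarrow [0,1]$ with pairwise disjoint supports by putting $\phi_i(t) = \max\{0,\, 1 - \delta^{-1} d(t, p_i)\}$. Since $d$ metrizes $K$, each $t \mapsto d(t, p_i)$ is continuous and $1$-Lipschitz, so each $\phi_i$ is continuous and $\delta^{-1}$-Lipschitz, with $\phi_i(p_i)=1$, $\phi_i(p_j) = 0$ for $j \neq i$, and $\{\phi_i > 0\} = \{t : d(t,p_i) < \delta\}$; the choice of $\delta$ makes these sets pairwise disjoint, whence $\sum_i \phi_i(t) \leq 1$ for every $t$.

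Then I would define
$$ \Psi(t) = \sum_{i=1}^N \phi_i(t)\, x_i^*. $$
The crucial point, and really the only thing to check carefully, is that $\Psi$ actually takes values in $B_{X^*}$: because $0 \in B_{X^*}$, each $x_i^* \in B_{X^*}$, and the weights satisfy $\phi_i(t) \geq 0$ with $\sum_i \phi_i(t) \leq 1$, the vector $\Psi(t)$ is a convex combination of $0$ and the $x_i^*$, hence lies in $\conv(\{0\} \cup \{x_1^*,\dots,x_N^*\}) \subseteq B_{X^*}$. Continuity of $\Psi$ is immediate from that of the finitely many $\phi_i$, and $\Psi$ is Lipschitz with constant at most $\sum_i \delta^{-1}\|x_i^*\| = N\delta^{-1}$. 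Finally, disjointness of supports gives $\Psi(p_i) = x_i^*$, so $\{x_1^*,\dots,x_N^*\} \subset \Psi(K)$.

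Applying Theorem \ref{iso_char} to this $\Psi$ then yields the isometric embedding $J(x)(t) = \Psi(t)(x)$ of $X$ into $C(K)$ as a Lipschitz subspace, which completes the argument. I expect no serious obstacle: the construction is a direct metric-space analogue of the embedding of a polyhedral space into $\ell_\infty^n$ proved above, the role of the bumps being to interpolate continuously between the prescribed extreme points. The only genuine subtlety is keeping the range inside $B_{X^*}$, and this is handled automatically by the normalisation $\sum_i \phi_i \leq 1$, which forces $\Psi$ to be a convex-combination-valued map.
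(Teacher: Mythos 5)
Your proof is correct and follows essentially the same route as the paper: choose finitely many points of $K$, build disjointly supported Lipschitz bump functions, form $\Psi(t)=\sum_i \phi_i(t)x_i^*$, check $\Psi(K)\subset B_{X^*}$, and invoke Theorem \ref{iso_char}. The only cosmetic differences are that you construct the bumps explicitly via $\max\{0,1-\delta^{-1}d(t,p_i)\}$ and cover $\Ext(B_{X^*})$ by $\Psi(K)\cup(-\Psi(K))$ using one representative per antipodal pair, whereas the paper lists all extreme points and covers them by $\Psi(K)$ alone.
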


\begin{proof} 
If $X$ is polyhedral and finite dimensional, its dual $X^{*}$ is also polyhedral and therefore $\Ext(B_{X^{*}})=\{x^*_1,\dots,x^*_N\}$ is a finite set. We may take different points $\{t_n\}_{n=1}^N \subset K$ and disjointly supported Lipschitz functions 
$\psi_n:K \rightarrow [0,1]$ such that $\psi_n(t_m)=0$ if $n \not = m$ and $\psi_n(t_n)=1$. 
The map defined by $\Psi(t)= \sum_{n=1}^N \psi_n(t) x^*_n$ is Lipschitz and 
$\| \Psi(t) \| \leq 1$ for every $t \in K$, as the sum is disjointly supported, and so $\Psi(K) \subset B_{X^*}$.
Since $\Ext(B_{X^*}) \subset \Psi(K)$, Theorem~\ref{iso_char} implies that the linear operator $J: X \rightarrow C(K)$ defined by $J(x)(t)=\Psi(t)(x)$ is an isometric embedding.
\end{proof}

The fact mentioned in the introduction is now explained in the following result, that implies a relation between the dimension of the Lipschitz copies of the Euclidean spaces and the dimension of $K$.

\begin{thm}[\cite{jonard_raja}]\label{main22}
Let $K$ be a metric compact space and let $n \in {\Bbb N}$. The following are equivalent:
\begin{itemize}
\item[(i)] there is an onto Lipschitz mapping $\phi: K \rightarrow {\Bbb I}^n$;
\item[(ii)] $C(K)$ contains isometric Lipschitz copies of all $(n+1)$-dimensional Banach spaces;
\item[(iii)] $C(K)$ contains an isometric Lipschitz copy of $({\Bbb R}^{n+1}, \| \cdot \|_2)$.   
\end{itemize}
\end{thm}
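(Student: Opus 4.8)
The plan is to prove the cycle of implications $(ii) \Rightarrow (iii) \Rightarrow (i) \Rightarrow (ii)$, since $(ii) \Rightarrow (iii)$ is trivial (the Euclidean space $({\Bbb R}^{n+1}, \| \cdot \|_2)$ is one particular $(n+1)$-dimensional Banach space). The heart of the argument is the interplay between the dual characterization of isometric Lipschitz embeddings from Theorem~\ref{iso_char} and the topological dimension encoded by the cube ${\Bbb I}^n$.

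\medskip
\noindent
\textbf{The implication $(iii) \Rightarrow (i)$.} Here I would start from an isometric Lipschitz embedding $J:({\Bbb R}^{n+1},\|\cdot\|_2) \rightarrow C(K)$ and exploit Corollary~\ref{dona}. Since the Euclidean space is G\^ateaux smooth (indeed it is smooth everywhere), the corollary gives that the unit sphere ${\Bbb S}^n = S_{X^*}$ (identifying $X^*$ with ${\Bbb R}^{n+1}$ via the Euclidean structure) is covered:
$$ {\Bbb S}^n \subset J^*(K) \cup (-J^*(K)), $$
where $\Psi = J^*|_K : K \rightarrow B_{X^*}$ is $w^*$-continuous and Lipschitz from $d$ to $\|\cdot\|$. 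The map $\Psi$ together with the antipodal map thus covers the sphere ${\Bbb S}^n$ by the Lipschitz image of $K$. A hemisphere of ${\Bbb S}^n$ is bi-Lipschitz homeomorphic to the cube ${\Bbb I}^n$; by restricting $\Psi$ (or $-\Psi$) to the preimage of a closed hemisphere and composing with this bi-Lipschitz chart, I obtain a Lipschitz map of a closed subset of $K$ onto ${\Bbb I}^n$. Finally I extend this to an onto Lipschitz mapping $\phi:K \rightarrow {\Bbb I}^n$ coordinatewise using Theorem~\ref{interpol} (the Kopeck\'a extension), which preserves both continuity and the Lipschitz bound on each of the $n$ scalar components.

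\medskip
\noindent
\textbf{The implication $(i) \Rightarrow (ii)$.} This is the constructive direction and the technical core. Given an onto Lipschitz $\phi:K \rightarrow {\Bbb I}^n$ and an arbitrary $(n+1)$-dimensional Banach space $X$, I must build a $w^*$-continuous Lipschitz $\Psi:K \rightarrow B_{X^*}$ with $\Ext(B_{X^*}) \subset \Psi(K) \cup (-\Psi(K))$, after which Theorem~\ref{iso_char} finishes the job. The strategy is to first produce such a map defined on the cube itself, namely a Lipschitz $\Phi:{\Bbb I}^n \rightarrow B_{X^*}$ whose image together with its antipode captures all extreme points, and then set $\Psi = \Phi \circ \phi$, which is Lipschitz as a composition of Lipschitz maps. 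The existence of $\Phi$ rests on the topological fact that $B_{X^*}$ is an $(n+1)$-dimensional body and its boundary sphere $S_{X^*}$ is an $n$-sphere, which (minus a point, or a hemisphere) is a Lipschitz image of ${\Bbb I}^n$; since $\Ext(B_{X^*}) \subset S_{X^*}$, covering the whole sphere by $\Phi(\,{\Bbb I}^n) \cup (-\Phi(\,{\Bbb I}^n))$ suffices. The delicate point is ensuring the covering map $\Phi$ onto the sphere can be taken \emph{Lipschitz}; here I would invoke that the identity sphere ${\Bbb S}^n$ surjects Lipschitzly onto $S_{X^*}$ (the norms being equivalent, the radial identification is bi-Lipschitz), reducing matters to the standard sphere, which is covered by two antipodal Lipschitz images of ${\Bbb I}^n$ exactly as in $(iii)\Rightarrow(i)$.

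\medskip
\noindent
The step I expect to be the main obstacle is controlling \emph{Lipschitzness} in the covering maps between the cube and the sphere, particularly arranging that $\Ext(B_{X^*})$ lies in the image without a uniform bi-Lipschitz chart covering the entire sphere at once (a single chart cannot be a homeomorphism onto all of ${\Bbb S}^n$). The resolution is precisely the antipodal trick already visible in Corollary~\ref{dona}: two hemispheres, each a Lipschitz image of ${\Bbb I}^n$, are interchanged by $x^* \mapsto -x^*$, so the pair $\Psi(K) \cup (-\Psi(K))$ covers the full sphere even though $\Psi$ alone covers only a hemisphere. Keeping the Lipschitz constants finite throughout, and verifying that the extreme points never escape to the ``equator'' where the two charts meet, is the computation I would carry out with care.
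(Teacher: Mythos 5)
Your cycle and your $(i) \Rightarrow (ii)$ argument follow the paper's route (the paper uses stereographic projection where you use hemisphere charts --- a cosmetic difference), and $(ii)\Rightarrow(iii)$ is indeed trivial. The problem is in $(iii) \Rightarrow (i)$, at the sentence ``by restricting $\Psi$ (or $-\Psi$) to the preimage of a closed hemisphere and composing with this bi-Lipschitz chart, I obtain a Lipschitz map of a closed subset of $K$ \emph{onto} ${\Bbb I}^n$''. Corollary~\ref{dona} gives only ${\Bbb S}^n \subset \Psi(K) \cup (-\Psi(K))$; it does not put any hemisphere inside $\Psi(K)$ or inside $-\Psi(K)$. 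The restriction of $\Psi$ to $\Psi^{-1}(H)$, with $H$ a closed hemisphere, has image $\Psi(K) \cap H$, which can be a proper subset of $H$: the two compact sets $\Psi(K)\cap{\Bbb S}^n$ and $(-\Psi(K))\cap{\Bbb S}^n$ may interlace --- picture a closed ``checkerboard'' decomposition of the sphere whose black and white pieces are exchanged by the antipodal map --- so that neither of them contains any hemisphere, and then your restricted map is not onto. Your closing paragraph repeats the same unjustified assumption (``$\Psi$ alone covers only a hemisphere''): the delicate point is not the equator, nor the Lipschitz constants of the charts, but that you have no control over \emph{which} part of the sphere $\Psi(K)$ covers.

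The missing ingredient is a Baire category argument, and it is exactly how the paper proceeds (see the filling-curve discussion right after Corollary~\ref{dona}). The sets $\Psi(K)\cap {\Bbb S}^n$ and $(-\Psi(K)) \cap {\Bbb S}^n$ are compact, hence closed in ${\Bbb S}^n$, and they cover ${\Bbb S}^n$; by Baire's theorem one of them --- say $\Psi(K) \cap {\Bbb S}^n$, replacing $\Psi$ by $-\Psi$ otherwise --- has nonempty interior relative to ${\Bbb S}^n$, and hence contains a small \emph{closed spherical cap} $C$. A closed cap, like the hemisphere, is bi-Lipschitz homeomorphic to ${\Bbb I}^n$, and now the inclusion $C \subset \Psi(K)$ genuinely holds, so $\Psi$ restricted to the closed set $\Psi^{-1}(C)$ \emph{is} onto $C$. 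From this point your plan is sound: compose with the chart and extend coordinatewise by Theorem~\ref{interpol} (the extension keeps each coordinate in $[0,1]$, and surjectivity is already achieved on $\Psi^{-1}(C)$); alternatively, as the paper does, compose $\Psi$ with a Lipschitz projection of $B_{X^*}$ onto a suitable hyperplane followed by a Lipschitz retraction onto the projected image of the cap, which yields the onto Lipschitz map $\phi: K \rightarrow {\Bbb I}^n$ defined on all of $K$, with no extension step needed.
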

 
\begin{proof}[Hint of proof.]
If there is an onto  Lipschitz mapping $\phi: K \rightarrow {\Bbb I}^n$, then with the help of the stereographic projection is possible to build a Lipschitz mapping $\psi: K \rightarrow {\Bbb S}^n$, such that ${\Bbb S}^n= \psi(K) \cup (-\psi(K))$. On the other hand, if there a Lipschitz mapping $\psi: K \rightarrow {\Bbb S}^n$ such that $\psi(K)$ has nonempty interior relative to ${\Bbb S}^n$, then is possible to find an onto Lipschitz mapping $\phi: K \rightarrow {\Bbb I}^n$.
\end{proof}

We will consider the Hilbert cube $[0,1]^{\Bbb N}$ with the metric
$$ d((a_n), (b_n)) = \sum_{k=1}^\infty 2^{-k} |a_k - b_k| .$$

\begin{coro}
The space of continuous functions on the Hilbert cube $C([0,1]^{\Bbb N})$ contains Lipschitz copies of all finite dimensional Banach spaces.
\end{coro}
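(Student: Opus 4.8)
The plan is to reduce the statement to Theorem~\ref{main22} by producing, for each $n \in {\Bbb N}$, an onto Lipschitz mapping from the Hilbert cube onto the cube ${\Bbb I}^n=[0,1]^n$. Once condition (i) of that theorem is verified for every $n$, its equivalence with (ii) yields isometric Lipschitz copies of all $(n+1)$-dimensional Banach spaces inside $C([0,1]^{\Bbb N})$, and letting $n$ range over ${\Bbb N}$ covers every finite dimension, which is exactly the assertion.

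First I would record that $([0,1]^{\Bbb N}, d)$ is a metric compact space, so that Theorem~\ref{main22} applies with $K=[0,1]^{\Bbb N}$ and this $d$. The natural candidate for the required surjection is the coordinate projection $\pi_n:(a_k)_k \mapsto (a_1,\dots,a_n)$, which is evidently onto ${\Bbb I}^n$.

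Then I would check that $\pi_n$ is Lipschitz. Equipping the target ${\Bbb I}^n$ with the weighted metric $\rho(a,b)=\sum_{k=1}^n 2^{-k}|a_k-b_k|$, one has, for $x=(a_k)$ and $y=(b_k)$ in the Hilbert cube,
$$ \rho(\pi_n(x),\pi_n(y)) = \sum_{k=1}^n 2^{-k}|a_k-b_k| \leq \sum_{k=1}^\infty 2^{-k}|a_k-b_k| = d(x,y), $$
so that $\pi_n$ is $1$-Lipschitz. Since $\rho$ is bi-Lipschitz equivalent to any norm metric on ${\Bbb R}^n$ (all norms on a finite-dimensional space being equivalent), $\pi_n$ remains Lipschitz for whichever standard metric on ${\Bbb I}^n$ is used in Theorem~\ref{main22}. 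This establishes condition (i) for every $n$.

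There is essentially no serious obstacle here: the only point requiring care is the comparison of the metric $d$ on the Hilbert cube with the chosen metric on ${\Bbb I}^n$, which the displayed computation settles. Given an arbitrary finite-dimensional Banach space $X$, say of dimension $n+1$, the implication (i)$\Rightarrow$(ii) of Theorem~\ref{main22} (applied with the surjection $\pi_n$) produces the desired isometric Lipschitz copy of $X$ in $C([0,1]^{\Bbb N})$, the one-dimensional case being trivial; as $n$ is arbitrary, the corollary follows.
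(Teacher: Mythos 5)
Your proof is correct and is exactly the argument the paper leaves implicit: the corollary is stated without proof as an immediate consequence of Theorem~\ref{main22}, the intended surjection being the coordinate projection $\pi_n$, which is Lipschitz for the metric $d((a_k),(b_k))=\sum_{k}2^{-k}|a_k-b_k|$ precisely as you compute. Your added care about comparing the weighted metric with a standard norm metric on ${\Bbb I}^n$ (and the trivial one-dimensional case) is fine and changes nothing essential.
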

 
 \begin{rema}
 If there is a family of onto Lipschitz mappings $\phi_n: K \rightarrow {\Bbb I}^n$ with the Lipschitz constants uniformly bounded, then there is an onto Lipschitz mapping $\Phi: K \rightarrow [0,1]^{\Bbb N}$. Indeed, for the standard metric on ${\Bbb I}^n$, the mappings  
 $\eta_n : {\Bbb I}^n \rightarrow [0,1]^{\Bbb N}$ defined by $\eta_n ((a_k)_{k=1}^n) = (a_k)_{k=1}^\infty$ taking $a_k=0$ for $k>n$ are equi-Lipschitz. The family of mappings $(\eta_n \circ \phi_n)$ is equicontinuous, therefore there is a uniformly convergent subsequence whose limit $\Phi$ is Lipschitz and onto.
 \end{rema}


\section{Fragmentability and universality}

The following definition applies to general topological spaces, but we will restrict ourselves to the compact frame.
We say that $K$ is fragmented by the metric $d$ if for every
nonempty subset $A \subset K$ and every $\varepsilon >0$ there is $U \subset K$ open
such that $A \cap U \not = \emptyset$ and $\diam(A \cap U) < \varepsilon$, where
`$\diam$'  is the diameter measured with respect to $d$. For a metrizable compact space, fragmentability with respect to a lsc metric $d$ is the same that separability in the $d$-topology. 
As the proof is not easy to find, we will provide some ideas and the interested reader can complete the details.
If $K$ is $d$-separable, then any closed subset $A \subset K$ can be expressed as $A=\bigcup_{n=1}^\infty A_n$ with 
$\diam(A_n) <\varepsilon$ for all $n$. Then apply Baire's theorem.
Now, if $K$ were not $d$-separable, then it would contain an uncountable set $A$ with points $\varepsilon$-separated for some $\varepsilon>0$. Dentability allows us to remove the points of $A$ one by one using intersection with open sets in a transfinite inductive process that eventually will finish with the empty set. As we may use the open sets from a countable basis, that would lead to a contradiction.\\

A compact that is fragmentable some lsc metric is called a {\it Radon-Nikodym} compact since fragmentability of the weak$^*$ compact subsets characterize dual Banach spaces with the Radon-Nikodym property (taking the name of the celebrated result on differentiation of measures), see \cite{DGZ, banach} for instance. That characterization actually comes from a combination of the results two fundamental works: given a Banach space $X$, Stegall \cite{ste} proved that $X^*$ has the Radon-Nikodym property if and only if every separable subspace of $X$ has a separable dual, and those Banach spaces $X$ are usually referred as {\it Asplund spaces} after the work of Namioka and Phelps \cite{NP}, who also established the link with fragmentability there.

\begin{theo}\label{Asplund_lip}
If $K$ is fragmentable by $d$, then any Lipschitz subspace of $C(K)$ is Asplund.
\end{theo}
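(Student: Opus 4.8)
The plan is to reduce the statement to the Radon--Nikodym/separable-determination machinery recalled above, the real work being a transfer of fragmentability along the adjoint map. Write $J:X\hookrightarrow C(K)$ for the inclusion and $\Psi=J^*|_K$. By Proposition~\ref{easy} we have $L(f)\le\lambda\|f\|$ on $X$ for some $\lambda>0$, and Proposition~\ref{lipschitz} tells us that $\Psi:K\to B_{X^*}$ is $w^*$-continuous, $\lambda$-Lipschitz from $d$ to $\|\cdot\|$, and that $\Psi(K)$ is norming, i.e. $\delta B_{X^*}\subset\cconv^{w^*}(\Psi(K)\cup(-\Psi(K)))$ for some $\delta>0$. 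Since Asplundness is separably determined (Stegall \cite{ste}: $X$ is Asplund iff every separable subspace has separable dual) and every subspace of $X$ is again a Lipschitz subspace, I may and will assume $X$ is separable and aim to prove that $X^*$ is norm-separable.

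The heart of the argument is the claim that $(\Psi(K),w^*)$ is fragmented by the norm of $X^*$. Writing $\tau$ for the compact topology of $K$, I would first record that, because $\Psi$ is $w^*$-continuous on the compact $K$, the image $\Psi(F)$ of any $\tau$-closed $F\subset K$ is $w^*$-compact, hence $w^*$-closed, and that $\Psi(\bigcap_\alpha K_\alpha)=\bigcap_\alpha\Psi(K_\alpha)$ for every decreasing family of $\tau$-closed sets (the nontrivial inclusion follows from the finite intersection property applied to the compact fibres $\Psi^{-1}(y)\cap K_\alpha$). Fix $\varepsilon>0$ and set $\eta=\varepsilon/\lambda$. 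Using fragmentability of $K$ by $d$, I would build by transfinite recursion a decreasing family $K_0=K\supset\cdots\supset K_\gamma=\emptyset$ of $\tau$-closed sets by peeling off at each successor step a relatively $\tau$-open piece $S_\alpha=K_\alpha\setminus K_{\alpha+1}$ of $d$-diameter $<\eta$, and intersecting at limits. Given a nonempty $B\subset\Psi(K)$, let $\mu$ be the least ordinal with $B\not\subset\Psi(K_\mu)$; the limit behaviour above forces $\mu=\xi+1$ to be a successor with $B\subset\Psi(K_\xi)$, so $V=X^*\setminus\Psi(K_{\xi+1})$ is $w^*$-open, meets $B$, and $B\cap V=B\setminus\Psi(K_{\xi+1})\subset\Psi(K_\xi)\setminus\Psi(K_{\xi+1})\subset\Psi(S_\xi)$ has norm-diameter $<\varepsilon$. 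This is exactly fragmentation of $\Psi(K)$ by the norm.

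To finish, I would note that since $X$ is separable the space $(B_{X^*},w^*)$ is compact metrizable, hence so is $(\Psi(K),w^*)$; as the norm is a $w^*$-lower semicontinuous metric, the equivalence recalled just above the theorem (for metrizable compacta, fragmentability coincides with separability in the fragmenting metric) yields that $\Psi(K)$ is norm-separable. Its closed linear span is then norm-separable, and it equals all of $X^*$ because $\Psi(K)$ is norming; thus $X^*$ is separable. By Stegall's separable determination, combined with the Namioka--Phelps link between Asplundness and fragmentability \cite{NP,ste}, this proves that $X$ is Asplund.

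The step I expect to be delicate is the transfer claim, and specifically the passage from the $\tau$-open pieces upstairs to $w^*$-open pieces downstairs: because $\Psi$ need not be injective, the direct image $\Psi(S_\xi)$ of a small relatively open piece is not itself relatively open in $\Psi(K)$, and this is precisely what the ``least $\mu$'' selection, together with the $w^*$-closedness of the deeper images $\Psi(K_{\xi+1})$, is designed to repair. The limit-ordinal identity $\Psi(\bigcap_\alpha K_\alpha)=\bigcap_\alpha\Psi(K_\alpha)$ is what guarantees that $\mu$ is a successor, and it must be verified with care.
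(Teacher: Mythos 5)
Most of your argument is sound: the separable reduction via Stegall is legitimate (every closed subspace of a Lipschitz subspace is again a Lipschitz subspace), and your transfinite ``peeling'' argument correctly proves that $\Psi(K)$ is fragmented by the norm --- this is in fact a proof of the result of Namioka, \cite[Lemma 2.1]{Namioka}, which the paper simply cites, and your treatment of limit stages via compactness of the fibres is exactly what is needed. The genuine gap is in the very last step: the claim that the norm-closed linear span of $\Psi(K)$ equals $X^*$ ``because $\Psi(K)$ is norming'' is false. Norming only gives $\delta B_{X^*}\subset\cconv^{w^*}(\Psi(K)\cup(-\Psi(K)))$, and a $w^*$-closed convex hull can be enormously larger than the norm-closed linear span. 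Concretely, $B_{c_0}$ is a $1$-norming subset of $\ell_\infty=(\ell_1)^*$ whose norm-closed span is $c_0\neq\ell_\infty$; and even with the $w^*$-compactness that your $\Psi(K)$ enjoys, the set of Dirac measures $\{\delta_t:t\in[0,1]\}$ is a $1$-norming, $w^*$-compact subset of $C[0,1]^*$ whose norm-closed linear span (the countably supported atomic measures) is a proper closed subspace of $C[0,1]^*$. So norm-separability of $\Psi(K)$ does not pass to $X^*$ by any span argument, and your proof as written does not establish that $X^*$ is separable.

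What is needed to bridge this step is precisely the second ingredient of the paper's proof: Namioka's theorem \cite[Theorem 2.5]{Namioka} that norm-fragmentability of a $w^*$-compact set is inherited by its $w^*$-closed convex hull (fragmentability also passes, trivially, to finite unions and to subsets). With it, $\cconv^{w^*}(\Psi(K)\cup(-\Psi(K)))$ is norm-fragmented, hence so are $\delta B_{X^*}$ and $B_{X^*}$; in your separable setting $(B_{X^*},w^*)$ is metrizable, so fragmentability yields norm-separability of $B_{X^*}$, that is, of $X^*$, and you conclude. (The paper avoids the separable reduction altogether: fragmentability of $B_{X^*}$ already means that $X^*$ has the Radon--Nikodym property, i.e.\ $X$ is Asplund.) Alternatively, you could prove by a barycentric vector-integration argument that $\cconv^{w^*}(W)=\overline{\conv}^{\|\cdot\|}(W)$ for every norm-separable $w^*$-compact set $W$, which would make the norming set span densely in your situation; but that is a nontrivial fact of the same depth as Namioka's theorem, not a consequence of the norming property alone. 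In summary: your transfer lemma is correct and replaces the paper's citation of \cite[Lemma 2.1]{Namioka} by an explicit proof, but the passage from $\Psi(K)$ to $B_{X^*}$ is missing, and that is exactly where the paper invokes \cite[Theorem 2.5]{Namioka}.
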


\begin{proof}
Let $X$ be a Lipschitz subspace of $C(K)$ and let $J: X \rightarrow C(K)$ be the isomorphic embedding. 
By Proposition \ref{lipschitz}, we know that $J^*$ is continuos from $K$ to $(X^*, w^*)$ and Lipschitz.
Since 
fragmentability of compact spaces is preserved by continuous mappings that also are Lipschitz for the metrics
\cite[Lemma 2.1]{Namioka}, we deduce that $J^*(K)$ is a 
$w^*$-compact fragmented by the norm.
Also by Proposition \ref{lipschitz}, we have
$$ \delta B_{X^*}  \subset \cconv(J^*(K) \cup (- J^*(K) )) , $$
that implies the norm fragmentability of $B_{X^*}$ since that property is preserved by finite unions (easy), $w^*$-closed convex hulls \cite[Theorem 2.5]{Namioka} and subsets (trivial).
Therefore, $X^*$ has the Radon-Nikodym property and so $X$ is Asplund.
\end{proof}

\begin{rema}
There are more properties that can be transferred from $K$ to $B_{X^*}$ through the mapping $J^*$, used in the proof of Theorem~\ref{Asplund_lip}, like the fragmentability, with consequences for $X$. For instance, assume that $K$ is a 
{\it descriptive compact} and fragmentable (by $d$, of course) and $X$ is isomorphic to a Lipschitz subspace of $C(K)$. Since descriptiveness is preserved by continuous images \cite[Corollary 3.4]{OR} and $w^*$-closed convex hulls \cite[Corollary 3.7]{OR}, we deduce that $B_{X^*}$ is descriptive and norm fragmentable. Both properties together imply that $X^*$ admits an equivalent locally uniformly rotund dual norm \cite[Theorem~1.3]{OR}. Descriptive topological spaces were introduced by Hansell \cite{Hansell}.

\end{rema}

\begin{lema}\label{lema_cantor}
Assume $K$ is not fragmentable by $d$. Then there exits $\varepsilon >0$ and two families $(U_s)$ and $(V_s)$ indexed by 
$\{0,1\}^{<{\Bbb N}}$ satisfying:
\begin{enumerate}
\item $U_{s\frown 0} \cup U_{s\frown 1} \subset U_s$ and $V_{s\frown 0} \cup V_{s\frown 1} \subset V_s$ for every $s$;
\item $\overline{V_s} \subset U_s$ for every $s$;
\item $d(\overline{V_s}, K \setminus U_s) > \varepsilon$ for every $s$;
\item $U_{s\frown 0} \cap U_{s\frown 1} = \emptyset$ for every $s$.
\end{enumerate}
\end{lema}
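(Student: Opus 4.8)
The plan is to build the two families recursively along the tree $\{0,1\}^{<{\Bbb N}}$, carrying an auxiliary invariant that each $V_s$ meets a fixed non-fragmentable witness set. First I would unwind the hypothesis: since $K$ is not fragmented by $d$, there is a nonempty $A\subset K$ and $\varepsilon_0>0$ such that $\diam(A\cap U)\geq \varepsilon_0$ for every open $U$ with $A\cap U\neq\emptyset$. I would fix $\varepsilon=\varepsilon_0/9$ (the precise fraction being dictated by the estimates below) and prove by induction on $|s|$ the four stated conditions together with the extra clauses that $U_s,V_s$ are open, $\overline{V_s}\subset U_s$, and $A\cap V_s\neq\emptyset$. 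The root is immediate: set $U_\emptyset=V_\emptyset=K$, so that $K\setminus U_\emptyset=\emptyset$ and condition (3) holds with the convention $d(\cdot,\emptyset)=+\infty$.

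For the inductive step, suppose $U_s,V_s$ are built with $A\cap V_s\neq\emptyset$. As $V_s$ is open and meets $A$, the hypothesis gives $\diam(A\cap V_s)\geq\varepsilon_0$, so I may pick $b_0,b_1\in A\cap V_s$ with $m:=d(b_0,b_1)>8\varepsilon$. Here lies the main obstacle: conditions (3) and (4) force $d(\overline{V_{s\frown 0}},\overline{V_{s\frown 1}})>\varepsilon$, yet under non-fragmentability small topological neighbourhoods of $b_0,b_1$ may have large $d$-diameter, so one cannot break the deadlock merely by shrinking neighbourhoods. The device that resolves this is Theorem~\ref{interpol}: applied to the $1$-Lipschitz map $b_0\mapsto 0,\ b_1\mapsto m$ on the closed set $\{b_0,b_1\}$, it yields $g:K\to[0,m]$ that is \emph{simultaneously} continuous and $d$-Lipschitz with $L(g)=1$. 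Continuity makes the level sets topologically well behaved, while the Lipschitz bound converts gaps of $g$-values into $d$-gaps. Concretely I would set $V_{s\frown 0}=V_s\cap\{g<2\varepsilon\}$ and $V_{s\frown 1}=V_s\cap\{g>m-2\varepsilon\}$; these are open, contained in $V_s$, and contain $b_0,b_1$, so the invariant $A\cap V_{s\frown i}\neq\emptyset$ persists. Moreover $\overline{V_{s\frown 0}}\subset\{g\leq 2\varepsilon\}$ and $\overline{V_{s\frown 1}}\subset\{g\geq m-2\varepsilon\}$, and since $L(g)=1$ any two points in these closed sets differ in $d$ by at least $m-4\varepsilon>4\varepsilon$; hence $d(\overline{V_{s\frown 0}},\overline{V_{s\frown 1}})>4\varepsilon$.

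It remains to fabricate $U_{s\frown 0},U_{s\frown 1}$. Writing $\delta_s=d(\overline{V_s},K\setminus U_s)>\varepsilon$, I would fix a radius $r$ with $\varepsilon<r<\min(\delta_s,2\varepsilon)$ and consider the closed $d$-balls $F_i=B[\overline{V_{s\frown i}},r]$, which are closed in $K$ by part (c) of the structural theorem of Section~2. Because $2r<4\varepsilon<d(\overline{V_{s\frown 0}},\overline{V_{s\frown 1}})$ the sets $F_0,F_1$ are disjoint, and because $r<\delta_s$ one has $F_i\subset B[\overline{V_s},r]\subset U_s$. Now $F_0,F_1$ and $K\setminus U_s$ are three pairwise disjoint closed sets in the compact, hence normal, space $K$, so normality provides pairwise disjoint open sets $U_{s\frown 0}\supset F_0$, $U_{s\frown 1}\supset F_1$ and $O\supset K\setminus U_s$. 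This gives (4) at once and yields (1), since $U_{s\frown i}\cap(K\setminus U_s)\subset U_{s\frown i}\cap O=\emptyset$ forces $U_{s\frown i}\subset U_s$; it gives (2) because $\overline{V_{s\frown i}}\subset F_i\subset U_{s\frown i}$; and for (3) note $K\setminus U_{s\frown i}\subset K\setminus F_i=\{x:d(\overline{V_{s\frown i}},x)>r\}$, so $d(\overline{V_{s\frown i}},K\setminus U_{s\frown i})\geq r>\varepsilon$. In particular the new margins again exceed $\varepsilon$, closing the induction. The only quantitative care is the chain $m>8\varepsilon\Rightarrow d(\overline{V_{s\frown 0}},\overline{V_{s\frown 1}})>4\varepsilon\Rightarrow$ the interval $(\varepsilon,\min(\delta_s,2\varepsilon))$ for $r$ is nonempty, which is precisely why $\varepsilon=\varepsilon_0/9$ was chosen.
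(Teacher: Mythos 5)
Your proof is correct, and structurally it is the same induction as the paper's: at each node of the binary tree one picks two points of the non-fragmentability witness $A \cap V_s$ that are far apart in $d$, surrounds them by open sets whose closures are $d$-separated, fattens these by closed $d$-balls $B[\,\cdot\,,r]$ (closed by part (c) of the first theorem of Section~2), and separates the resulting disjoint closed sets by disjoint open sets using normality of $K$, carrying the invariant that each $V_s$ still meets $A$. The genuine divergence is in how the $d$-separated neighbourhoods $V_{s\frown 0}, V_{s\frown 1}$ are obtained. The paper gets them directly from lower semicontinuity: the set $\{(x,y) : d(x,y) > 3\varepsilon\}$ is open in $K \times K$, hence contains a product neighbourhood $W_0 \times W_1$ of the chosen pair, and regularity of the compact space then yields open sets $V_{s\frown i} \ni b_i$ with $\overline{V_{s\frown i}} \subset W_i$, so the closures are automatically $3\varepsilon$-apart --- which shows your worry that one ``cannot break the deadlock merely by shrinking neighbourhoods'' is misplaced: one never needs the shrunken sets to have small $d$-diameter, only to be $d$-far from one another, which is exactly what lsc provides. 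Your substitute --- invoking Kopeck\'a's Theorem~\ref{interpol} on the two-point set $\{b_0,b_1\}$ to manufacture a continuous $1$-Lipschitz function $g$ and taking sublevel/superlevel sets of $g$ --- is valid, and it is in fact the same two-point extension trick the paper itself uses in proving the Jayne--Namioka--Rogers embedding theorem; it is a heavier tool than needed at this spot, but it pays a small dividend: your intermediate radius $r \in (\varepsilon, \min(\delta_s, 2\varepsilon))$ yields the strict inequality $d(\overline{V_s}, K \setminus U_s) > \varepsilon$ of condition (3) cleanly, whereas the paper's padding by exactly $\varepsilon$ literally gives only $\geq \varepsilon$, a cosmetic defect that your bookkeeping repairs.
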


\begin{proof}
If $K$ is not fragmentable by $d$ there is a closed subset $A \subset K$ and $\varepsilon>0$ such that every nonempty relatively open set of $A$ has diameter greater than $3\varepsilon$. 
The construction of the families will be done by induction on the length of the sequence $s$, adding one more condition: points $(x_s) \subset A$ with $x_s \in V_s$ and $d(x_{s \frown 0}, x_{s \frown 1}) > 3\varepsilon$.
Take two points $x_0, x_1 \in A$ with $d(x_0,x_1) > 3\varepsilon$. Using the lower semicontinuity of the metric, take now two open sets $V_0,V_1$ with $x_0 \in V_0$ and $x_1 \in V_1$ such that the distance between 
$\overline{V_0}$ and $\overline{V_1}$ is at least $3\varepsilon$. The closed sets 
$ B[\overline{V_0}, \varepsilon] $ and $ B[\overline{V_1}, \varepsilon]$
are disjoint. Finally take disjoint open sets $U_0 \supset  B[ \overline{V_0}, \varepsilon ]$ and
$U_1 \supset  B[\overline{V_1}, \varepsilon ]$.\\
Assume everything is built for $|s| \leq n$. For a given $s$ with $|s|=n$ we will construct the objects for $s \frown 0$ and $s \frown 1$.
Since $x_s \in A \cap V_s$, we have $A \cap V_s \not = \emptyset$. 
This relatively open set of $A$ has diameter greater than $3\varepsilon$. Take points $x_{s \frown 0}, x_{s \frown 1} \in A \cap V_s$ with $d(x_{s \frown 0}, x_{s \frown 1}) >3\varepsilon$. 
Take open sets $V_{s \frown 0},V_{s \frown 1} \subset V_s$ with $x_{s \frown 0} \in V_{s \frown 0}$ and $x_{s \frown 1} \in V_{s \frown 1}$ such that the distance between 
$\overline{V_{s \frown 0}}$ and $\overline{V_{0s \frown 1}}$ is at least $3\varepsilon$. The closed sets 
$ B[ \overline{V_{s \frown 0}}, \varepsilon ]$ and $ B[\overline{V_{s \frown 1}}, \varepsilon] $
are disjoint, so they can be separated by open sets $U_{s \frown 0}$ and $U_{s \frown 1}$. Without loss of generality we may assume $U_{s \frown 0}, U_{s \frown 1} \subset U_s$. That completes the induction argument.
\end{proof}

\begin{rema}\label{rema_metriza}
If $K$ is besides metrizable, say by a metric $\rho$, then we may add to the construction the condition that $\rho$-diameter of $\overline{U_s}$ goes to $0$ with $|s| \rightarrow \infty$. Indeed, the sets $U_s$ are contained into sets $V_s$ that are taken to be neighbourhoods of the given points $x_s$. Therefore, we may take $V_s$ contained in the ball of center $x_s$ and radius  
$|s|^{-1}$ (with respect to $\rho$), for instance.
\end{rema}

The following is the second key result of Lipschitz subspaces outside the metrizable case.

\begin{theo}[\cite{raja1}]\label{main}
The following statements are equivalent:
\begin{itemize}
\item[(i)] $K$ is not fragmentable by $d$;
\item[(ii)] $C(K)$ contains an isometric Lipschitz copy of $\ell_1$;
\item[(iii)] $C(K)$ contains an isomorphic Lipschitz copy of $\ell_1$.
\end{itemize}
\end{theo}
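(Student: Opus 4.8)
The plan is to prove the chain $(ii)\Rightarrow(iii)\Rightarrow(i)\Rightarrow(ii)$. The implication $(ii)\Rightarrow(iii)$ is immediate, since an isometric copy is in particular an isomorphic one. For $(iii)\Rightarrow(i)$ I would argue by contraposition: if $K$ \emph{were} fragmentable by $d$, then Theorem~\ref{Asplund_lip} forces every Lipschitz subspace of $C(K)$ to be Asplund, whereas $\ell_1$ is not Asplund (its dual $\ell_\infty$ is nonseparable while $\ell_1$ is separable, so $\ell_1$ cannot have separable dual). Hence no isomorphic Lipschitz copy of $\ell_1$ can sit inside $C(K)$, which is exactly the contrapositive of $(iii)\Rightarrow(i)$. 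All the real work is therefore concentrated in $(i)\Rightarrow(ii)$.

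For $(i)\Rightarrow(ii)$ I would feed the dyadic scheme produced by Lemma~\ref{lema_cantor} into an explicit isometric construction. Starting from the families $(U_s)$, $(V_s)$ and the constant $\varepsilon>0$ given by the lemma, note that for each $s\in\{0,1\}^{<{\Bbb N}}$ the sets $\overline{V_s}$ and $K\setminus U_s$ are disjoint closed sets with $d(\overline{V_s},K\setminus U_s)>\varepsilon$; so the function equal to $1$ on $\overline{V_s}$ and $0$ on $K\setminus U_s$ is continuous and Lipschitz on their union with constant at most $\varepsilon^{-1}$, and Theorem~\ref{interpol} (Kopeck\'a's extension) yields $g_s\in C(K)\cap\lip(K)$ with values in $[0,1]$, $g_s|_{\overline{V_s}}=1$, $g_s|_{K\setminus U_s}=0$ and $L(g_s)\le\varepsilon^{-1}$. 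The crucial structural observation is that, combining conditions (1) and (4) of the lemma, the sets $\{U_s:|s|=n\}$ are \emph{pairwise disjoint} at each fixed level $n$, and since this level is \emph{finite} (there are only $2^n$ strings of length $n$), the level function
$$ f_n \;=\; \sum_{|s|=n}\theta(s)\,g_s, \qquad \theta(s)=(-1)^{\text{last entry of }s}, $$
is a finite sum of continuous Lipschitz functions with disjoint supports. Hence $f_n\in C(K)\cap\lip(K)$ with $\|f_n\|_\infty=1$ and, by the disjointness, $L(f_n)\le 2\varepsilon^{-1}$ uniformly in $n$.

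It then remains to check that $(a_n)\mapsto\sum_n a_n f_n$ is an isometric Lipschitz embedding of $\ell_1$. Continuity and Lipschitzness of the image are routine: the series converges uniformly because $\|\sum_{n>N}a_n f_n\|_\infty\le\sum_{n>N}|a_n|\to0$ for $(a_n)\in\ell_1$, and its Lipschitz constant is at most $2\varepsilon^{-1}\|(a_n)\|_1$. The upper bound $\|\sum a_n f_n\|_\infty\le\|(a_n)\|_1$ follows since the nesting (1) forces the strings $s$ with $t\in U_s$ to be the initial segments of a single branch, whence $|\sum_n a_n f_n(t)|\le\sum_n|a_n|\,g_{\sigma_t|n}(t)\le\sum_n|a_n|$. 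The heart of the matter, and the step I expect to be the most delicate to phrase cleanly, is the matching lower bound: given $(a_n)$ I would select the branch $\sigma\in\{0,1\}^{{\Bbb N}}$ that follows the signs of the $a_n$ (at level $n$ take the child making $\theta(s)=\operatorname{sign}(a_n)$), truncate at an index $n^*$ with $\sum_{n>n^*}|a_n|<\eta$, and evaluate at any point $x\in\overline{V_{\sigma|n^*}}$. The nesting $\overline{V_{\sigma|n^*}}\subset\overline{V_{\sigma|n}}$ gives $g_{\sigma|n}(x)=1$, hence $f_n(x)=\operatorname{sign}(a_n)$, for all $n\le n^*$, so $\sum_n a_n f_n(x)\ge\sum_{n\le n^*}|a_n|-\eta\ge\|(a_n)\|_1-2\eta$; letting $\eta\to0$ yields the isometry, which Theorem~\ref{iso_char} then packages as the desired embedding. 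The main obstacle is thus conceptual rather than computational: recognising that the finiteness of each level rescues the continuity of the $f_n$, and that a single well-chosen branch simultaneously realises all the prescribed signs.
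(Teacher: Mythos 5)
Your proof is correct and follows essentially the same route as the paper: the implications $(ii)\Rightarrow(iii)$ and $(iii)\Rightarrow(i)$ (via Theorem~\ref{Asplund_lip} and the non-Asplundness of $\ell_1$) are identical, and your $(i)\Rightarrow(ii)$ rests on the same ingredients as the paper's, namely Lemma~\ref{lema_cantor}, Kopeck\'a's extension Theorem~\ref{interpol}, and the lower $\ell_1$-estimate obtained by evaluating at a point of a sign-following branch. The only (cosmetic) difference is that you manufacture each level function $f_n$ as a finite sum of $2^n$ disjointly supported nodewise bumps $\theta(s)\,g_s$, whereas the paper performs one extension per level, extending the $n$-th coordinate of the coding map $\Sigma$ defined on the Cantor-like set $H=\bigcap_{n}\bigcup_{|s|=n}\overline{V_s}$; on $H$ the two families of functions agree up to the sign convention, so the resulting embeddings are the same.
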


\begin{proof}
$(i) \Rightarrow (ii)$ 
If $K$ is not $d$-fragmentable, we may produce a Cantor-like closed subset using Lemma \ref{lema_cantor} this way
$$ H = \bigcap_{n \in {\Bbb N}} \bigcup_{|s|=n} \overline{V_s}. $$
For every $t \in H$ there is a unique $\sigma(t) \in \{0,1\}^{\Bbb N}$ such that $t \in \bigcap_{n \in {\Bbb N}}  \overline{V_{\sigma(t) |n}}$. The mapping $ \Sigma: H \rightarrow \{-1,1\}^{\Bbb N} $
defined by taking $\Sigma(t)$ the sequence $\sigma(t)$ after changing the $0$'s by $-1$'s. It is easy to check that $\Sigma$ is onto and continuous. Moreover, if $d(t_1,t_2) \leq 2\varepsilon$, then $\Sigma(t_1)=\Sigma(t_2)$.
Let $p_n$ the projection on the n'th coordinate of
$\{-1,1\}^{\Bbb N}$ and consider the function $p_n \circ \Sigma$ and note that it is continuous and $\varepsilon^{-1}$-Lipschitz.
By Theorem \ref{interpol}, there is a continuous extension $f_n$ of $p_n
\circ \Sigma$ to $K$ with the same Lipschitz bound $\varepsilon^{-1}$.
The sequence $(f_n)$ is equivalent to the canonical basis of
$\ell_1$. Indeed, given numbers real numbers $(a_n)$ for $i=1,
\dots, m$ there is $x \in H$ such that $f_n(x)=\mbox{sign}(a_n)$ and thus
$$ \| \sum_{n=1}^{m} a_n f_n \| = \sum_{n=1}^{m} |a_n| $$
which means that $E=\overline{span}^{\|.\| }\{f_n: n \in {\Bbb N}\}$ is isometric to $\ell_1$.
An easy computation shows that if $f \in E$, then $f$ is $\varepsilon^{-1} \|f\|$-Lipschitz.\\
$(ii) \Rightarrow (iii)$ It is obvious.\\
$(iii) \Rightarrow (i)$  If $C(K)$ contains an isorphic Lipschitz copy of $\ell_1$, then Theorem \ref{Asplund_lip} implies that $K$ cannot be fragmentable by $d$ because $\ell_1$ is not Asplund.
\end{proof}

\begin{rema}
It is possible to add an equivalent condition to Theorem \ref{main}: there is an $d$-equicontinuous bounded sequence 
$(f_n) \subset C(K)$ equivalent to the canonical basis of $\ell_1$, see \cite{raja1}.
\end{rema}

One longstanding problem in Banach theory was to know if a separable space $X$ having a non separable dual must contain a copy of 
$\ell_1$. A nonseparable reformulation is whether Asplundness equals not containing $\ell_1$. The problem was solved negatively by James \cite{James} and Lindenstrauss and Stegall \cite{LS}, independently. However, we have the following {\it boutade}.

\begin{coro}[\cite{raja1}]\label{coro_Asplund}
A Banach space $X$ is Asplund if and only if $C(B_{X^*})$ does not contain a Lipschitz copy of $\ell_1$.
\end{coro}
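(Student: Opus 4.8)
The plan is to read the corollary as an immediate consequence of Theorem~\ref{main} combined with the classical fragmentability description of Asplundness. First I would fix the ambient setting by taking $K = B_{X^*}$ endowed with the weak$^*$ topology, and letting $d$ be the metric induced by the norm of $X^*$. As recalled in the preliminary sections, this $d$ is lower semicontinuous with respect to the weak$^*$ topology, so the whole apparatus developed for a compact with an lsc metric—in particular Theorem~\ref{main}—applies verbatim to the pair $(B_{X^*}, d)$.

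The second step is to invoke the bridge between fragmentability and the geometry of $X$. A Banach space $X$ is Asplund exactly when its dual $X^*$ has the Radon--Nikodym property, and for a dual space the latter is equivalent to the weak$^*$ compact ball $B_{X^*}$ being fragmented by the norm metric $d$; this is precisely the combination of the theorems of Stegall \cite{ste} and of Namioka--Phelps \cite{NP} quoted above (see also \cite{DGZ, banach}). Thus one records the equivalence
$$ X \text{ is Asplund} \iff B_{X^*} \text{ is fragmented by } d. $$

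Finally I would apply Theorem~\ref{main} with $K = B_{X^*}$. There the equivalence $(i) \Leftrightarrow (iii)$ states that $B_{X^*}$ fails to be fragmented by $d$ if and only if $C(B_{X^*})$ contains an isomorphic (equivalently, by $(ii)$, isometric) Lipschitz copy of $\ell_1$. Contraposing this and chaining it with the previous display yields
$$ X \text{ is Asplund} \iff B_{X^*} \text{ is fragmented by } d \iff C(B_{X^*}) \text{ has no Lipschitz copy of } \ell_1, $$
which is the assertion of the corollary.

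As for the level of difficulty, there is essentially no hard step left to perform: the substantive content has already been absorbed into Theorem~\ref{main} and into the standard Radon--Nikodym/fragmentability dictionary, so the argument is a bookkeeping of equivalences. The one point I would take a moment to clarify is the reading of the phrase ``Lipschitz copy of $\ell_1$'' as a linear subspace of $C(B_{X^*})$ consisting of $d$-Lipschitz functions and isomorphic to $\ell_1$; since conditions $(ii)$ and $(iii)$ of Theorem~\ref{main} are jointly equivalent to $(i)$, it is immaterial whether one demands an isometric or a merely isomorphic copy, so the statement is unambiguous and no separate verification is needed.
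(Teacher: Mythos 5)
Your proof is correct and is exactly the argument the paper intends: the corollary is stated without proof as an immediate consequence of Theorem~\ref{main} applied to $K=B_{X^*}$ with the (weak$^*$ lower semicontinuous) norm metric, combined with the Stegall/Namioka--Phelps dictionary ``$X$ Asplund $\iff$ $X^*$ has RNP $\iff$ $B_{X^*}$ is norm-fragmented'' recalled just before that theorem. Your closing observation that the isometric/isomorphic ambiguity in ``Lipschitz copy'' is immaterial, thanks to the equivalence of (ii) and (iii), is also the right reading of the statement.
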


Endowing a compact with the discrete metric we retrieve this classic result.

\begin{coro}
$K$ is not scattered if and only if $C(K)$ contains an isomorphic copy of $\ell_1$.
\end{coro}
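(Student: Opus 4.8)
The plan is to invoke Theorem~\ref{main} with the discrete metric on $K$, defined by $d(t_1,t_2)=1$ whenever $t_1\neq t_2$. First I would check that this $d$ is lower semicontinuous: for $r<1$ the set $\{(t_1,t_2): d(t_1,t_2)>r\}$ is the complement of the diagonal, which is open precisely because $K$ is Hausdorff, while for $r\geq 1$ it is empty. Thus the whole apparatus of the previous sections, and Theorem~\ref{main} in particular, applies to $(K,d)$.

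The key reduction is that for the discrete metric every continuous function is automatically Lipschitz: for $f\in C(K)$ one has $L(f)=\sup\{|f(t_1)-f(t_2)|: t_1\neq t_2\}\leq 2\|f\|_\infty<+\infty$, so $C(K)\subset\lip(K)$ and hence \emph{every} linear subspace of $C(K)$ is a Lipschitz subspace. Consequently, for this metric the notions ``isomorphic copy of $\ell_1$'' and ``isomorphic Lipschitz copy of $\ell_1$'' coincide, so the plain statement of the corollary matches conditions (ii) and (iii) of Theorem~\ref{main} exactly.

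It then remains to identify fragmentability of $K$ by the discrete metric with scatteredness. Since any two distinct points of $K$ are at $d$-distance exactly $1$, for any $\varepsilon\in(0,1)$ and any nonempty $A\subset K$ an open set $U$ with $A\cap U\neq\emptyset$ and $\diam(A\cap U)<\varepsilon$ forces $A\cap U$ to be a singleton, that is, it exhibits a point of $A$ isolated in its relative topology; conversely, a relatively isolated point of $A$ provides an open $U$ with $A\cap U$ a singleton, so that $\diam(A\cap U)=0<\varepsilon$ for every $\varepsilon>0$. Hence $K$ is fragmented by $d$ precisely when every nonempty subset of $K$ has a relatively isolated point, which is the definition of a scattered space. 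I expect this unwinding of the fragmentability definition to be the only step requiring care, and it is elementary.

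Combining the three observations, $K$ is not scattered if and only if $K$ is not fragmented by $d$, if and only if (by Theorem~\ref{main}) $C(K)$ contains an isomorphic Lipschitz copy of $\ell_1$, if and only if $C(K)$ contains an isomorphic copy of $\ell_1$, which is the desired conclusion.
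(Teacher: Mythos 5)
Your proposal is correct and follows exactly the route the paper intends: the paper introduces this corollary with the single line ``Endowing a compact with the discrete metric we retrieve this classic result,'' and your three observations (lower semicontinuity of the discrete metric via the Hausdorff property, the automatic Lipschitzness of all of $C(K)$, and the identification of discrete-metric fragmentability with scatteredness) are precisely the details being left to the reader before invoking Theorem~\ref{main}.
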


\begin{proof}[Full proof of Theorem \ref{cecero}]
If $K$ is fragmentable by $d$ then $K$ is sequentially compact \cite{Namioka}, therefore the restricted proof above gives the result. Otherwise, if $K$ is not fragmentable by $d$ we may use the sets built in Lemma \ref{lema_cantor} in this way. Let $s_n$ be the sequence made up of  $n-1$ zeroes followed by a single $1$.
The sets $(U_{s_n})$ are disjoint. Indeed, fix $m<n$ and note that $U_{s_m}$ is disjoint with $U_{(0,\dots,0)}$ ($m$ zeroes), that contains $U_{s_n}$. Using Theorem \ref{interpol}, there are continuous $\varepsilon^{-1}$-Lipschitz functions $f_n: K \rightarrow [0,1]$ such that $f_n|_{V_{s_n}} = 1$ and  $f_n|_{K \setminus U_{s_n}} = 0$. Proceeding like in the restricted proof, those functions generate a Lipschitz subspace of $C(K)$ isometric to $c_0$.
\end{proof}

Now we turn our attention to the ``Lipschitz universality'' of $C(K)$ spaces. 
We say that a space $C(K)$ is Lipschitz universal if $C(K)$ contains an isometric Lipschitz copy of $X$ for any separable Banach space $X$.
Let $\Delta$ be the Cantor space $\{0,1\}^{\Bbb N}$ together with the discrete metric.

\begin{prop}
The space $C(\Delta)$ is isometric Lipschitz universal for the separable Banach spaces. If $K$ is a metrizable compact together a lsc metric such that $C(K)$ is isomorphic Lipschitz universal for the separable Banach spaces, then $K$ contains a subset equivalent to $\Delta$, that is, homeomorphic and Lipschitz isomorphic. 
\end{prop}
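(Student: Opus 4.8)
The plan is to treat the two assertions separately. For the universality of $C(\Delta)$ the decisive observation is that the discrete metric makes \emph{every} continuous function Lipschitz: if $f\in C(\Delta)$ then $L(f)=\sup_{x\ne y}|f(x)-f(y)|\le 2\|f\|_\infty$, since $d(x,y)=1$ whenever $x\ne y$. Hence $\lip(\Delta)=C(\Delta)$ and any closed linear subspace of $C(\Delta)$ is automatically a Lipschitz subspace, so it suffices to embed an arbitrary separable $X$ linearly and isometrically into $C(\Delta)$. Fixing a continuous surjection $q\colon\Delta\to[0,1]$ (for instance $q((a_n))=\sum_n a_n2^{-n}$), the composition operator $f\mapsto f\circ q$ is a linear isometry of $C[0,1]$ into $C(\Delta)$ because surjectivity of $q$ gives $\|f\circ q\|_\infty=\|f\|_\infty$; composing it with the Banach--Mazur embedding $X\hookrightarrow C[0,1]$ produces the required isometric Lipschitz copy.

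For the converse I would first extract non-fragmentability and then build a Cantor set by hand. Since $\ell_1$ is separable, the hypothesis yields an isomorphic Lipschitz copy of $\ell_1$ inside $C(K)$, so Theorem~\ref{main} (the implication (iii)$\Rightarrow$(i)) shows that $K$ is not fragmented by $d$; as at the start of the proof of Lemma~\ref{lema_cantor}, this provides a closed set $A\subset K$ and $\varepsilon>0$ such that every nonempty relatively open subset of $A$ has $d$-diameter greater than $3\varepsilon$. Fix also a metric $\rho$ inducing the compact topology of $K$. The one genuine difficulty is that a bi-Lipschitz copy of $\Delta$ must have \emph{finite} $d$-diameter, which the construction does not guarantee by itself, so I would shrink $A$ first: writing $K=\bigcup_n B[\{t_0\},n]$, a countable cover by $K$-closed sets (recall that $B[H,r]$ is closed whenever $H$ is closed), Baire's theorem on the compact space $A$ yields an index $N$ and a nonempty relatively open $W\subset A$ with $W\subset B[\{t_0\},N]$. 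The set $A'=\overline{W}^{A}$ is then closed, has $d$-diameter at most $2N$, and still has every nonempty relatively open subset of $d$-diameter greater than $3\varepsilon$, because $W$ is dense and relatively open in $A'$.

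Running the construction of Lemma~\ref{lema_cantor} inside $A'$ (its argument uses only the displayed property of $A$), and shrinking the $V_s$ as in Remark~\ref{rema_metriza} so that the $\rho$-diameters of the $\overline{V_s}$ tend to $0$, I obtain families $(U_s),(V_s)$ and points $x_s\in A'\cap V_s$. Setting $H=\bigcap_n\bigcup_{|s|=n}\overline{V_s}$, I define $\phi\colon\{0,1\}^{\Bbb N}\to H$ by letting $\phi(\sigma)$ be the unique point of $\bigcap_n\overline{V_{\sigma|n}}$, which is well defined thanks to the shrinking $\rho$-diameters. Conditions~(1) and~(4) make the sets $U_s$ of a fixed length pairwise disjoint, and this yields both the injectivity and the surjectivity of $\phi$ onto $H$; continuity of $\phi$ is again immediate from the shrinking diameters, so $\phi$ is a homeomorphism of $\{0,1\}^{\Bbb N}$ onto $H$ with its subspace topology. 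For the metric comparison, if $\sigma\ne\tau$ first differ after a common prefix $s$ then $\phi(\sigma)\in\overline{V_{s\frown 0}}$ while $\phi(\tau)\in\overline{V_{s\frown 1}}\subset U_{s\frown 1}\subset K\setminus U_{s\frown 0}$, so condition~(3) gives $d(\phi(\sigma),\phi(\tau))>\varepsilon$; hence $\phi^{-1}$ is $\varepsilon^{-1}$-Lipschitz from $(H,d)$ to the discrete metric of $\Delta$.

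The reverse Lipschitz estimate is the step I expect to be the main obstacle, and it is exactly what the preliminary reduction supplies. Since $\phi(\sigma)=\lim_n x_{\sigma|n}$ and the points $x_{\sigma|n}$ lie in the $K$-closed set $A'$, the whole of $H$ is contained in $A'\subset B[\{t_0\},N]$, whence $d(\phi(\sigma),\phi(\tau))\le 2N$ for all branches and $\phi$ is $2N$-Lipschitz. Together with the reverse bound and the homeomorphism property, $\phi$ exhibits $H$ as a subset of $K$ that is homeomorphic and Lipschitz isomorphic to $\Delta$, which finishes the proof.
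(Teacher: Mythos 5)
Your proposal is correct and follows essentially the same route as the paper: universality of $C(\Delta)$ via Banach--Mazur together with the fact that the discrete metric makes every continuous function Lipschitz, and, for the converse, non-fragmentability extracted from a Lipschitz copy of $\ell_1$ (Theorem~\ref{main}), the Cantor scheme of Lemma~\ref{lema_cantor} refined as in Remark~\ref{rema_metriza}, and a Baire category argument with the closed balls $B[t_0,n]$ to obtain $d$-boundedness. The only difference is the order of that last step: you apply Baire to the non-fragmentable set $A$ \emph{before} running the Cantor construction (verifying that $\overline{W}^{A}$ inherits the diameter property), whereas the paper builds the Cantor set $H$ first and then passes to a $d$-bounded Cantor subset of $H$ using that nonempty open subsets of $\Delta$ contain copies of $\Delta$; both variants are sound.
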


\begin{proof}
By Mazur's theorem, $C(\Delta)$ is isometric universal in the standard sense and every $f \in C(\Delta)$ is Lipschitz with respect to the discrete norm. On the other hand, if $C(K)$ is isomorphic Lipschitz universal for the separable Banach spaces, then $K$ is not fragmentable by the associated metric $d$. The proof of Theorem \ref{main} together Remark \ref{rema_metriza} provide a set $H$ that is homeomorphic to the Cantor space and its points are uniformly separated, with separation bounded below by $2\varepsilon$. In order the restriction of $d$ to $H$ be Lipschitz isomorphic to the discrete metric is enough that $d$ be bounded. That is not ensured by the hypotheses, so we may proceed this way. Take any $t_0 \in H$ and consider the closed balls $B[t_0, n]$ for $n \in {\Bbb N}$, that are closed with respect to the topology of $H$ too. Since  $H \subset \bigcup_{n=1}^\infty B[t_0, n]$, there is one ball with nonempty interior (with respet to $H$). Now, any nonempty open set of the Cantor space contains an homeomorphic copy of the Cantor space itself. Indeed, the standard basis for the product topology is composed of sets of the form
$$ (a_1,a_2,\dots,a_n) \times \{-1,1\}^{\Bbb N} $$
that are homeomorphic to the Cantor space itself.  That provides us with a copy of the Cantor space where $d$ is bounded.
\end{proof}

We do not know if any metrizable $K$ not fragmentable with respect to $d$ is Lipschitz universal for the separable Banach spaces. Actually, we do not know the answer for $K=[0,1]^{\Bbb N}$ endowed with the supremum norm (that is, essentially, $B_{\ell_\infty}$ with the weak$^*$ topology and the norm metric). A main issue here is that the method to build linear extension operators within spaces of continuous functions seldom preserve Lipchitzness, see \cite{BL} or \cite{Wojta} for instance.


\section{An ordering for compacta with lsc metrics}

We have seen that the problem of identifying Lipschitz subspaces reduces to the study of applications between compact spaces that are Lipschitz with respect to the associated lsc metrics. In this section we will restrict our attention to metrizable compact spaces with associated bounded lsc metrics. Define an order among this class by $ K_1 \preceq K_2$ if there exists an onto continuous mapping $\phi: K_2 \rightarrow K_1$ that is also Lipschitz for the metrics. The definition implies trivially the following observation.

\begin{prop}
If $ K_1 \preceq K_2$, any isometric (resp. isomorphic) Lipschitz subspace of $C(K_1)$ is an isometric (resp. isomorphic) Lipschitz subspace of $C(K_2)$.
\end{prop}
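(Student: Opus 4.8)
The plan is to unwind the definition of the order $K_1 \preceq K_2$ and transport an embedding of $C(K_1)$ to one of $C(K_2)$ via composition with $\phi$. Concretely, suppose $\phi: K_2 \rightarrow K_1$ is the given onto continuous Lipschitz mapping and let $X \subset C(K_1)$ be a Lipschitz subspace, say realized by an embedding $J: X \rightarrow C(K_1)$. The natural candidate is the composition operator $C_\phi: C(K_1) \rightarrow C(K_2)$ defined by $C_\phi(g) = g \circ \phi$, and I would study the composite map $C_\phi \circ J : X \rightarrow C(K_2)$.

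First I would verify that $C_\phi$ maps $C(K_1)$ into $C(K_2)$: since $\phi$ is continuous, $g \circ \phi$ is continuous whenever $g$ is, so $C_\phi$ is well-defined. Next, the crucial point is that $C_\phi$ preserves the supremum norm because $\phi$ is \emph{onto}: for any $g \in C(K_1)$ we have $\|g \circ \phi\|_\infty = \sup_{t \in K_2} |g(\phi(t))| = \sup_{s \in K_1} |g(s)| = \|g\|_\infty$, the middle equality being exactly surjectivity of $\phi$. Hence $C_\phi$ is a linear isometry of $C(K_1)$ into $C(K_2)$, and therefore $C_\phi \circ J$ is an isometric (respectively isomorphic) embedding of $X$ into $C(K_2)$ whenever $J$ is.

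It remains to check that the image lands in $\lip(K_2)$, i.e.\ that the transported functions are still Lipschitz for the metric on $K_2$. Here I would use that $\phi$ is Lipschitz for the metrics: if $g \in \lip(K_1)$ with constant $L(g)$, then for $t_1, t_2 \in K_2$ one has $|g(\phi(t_1)) - g(\phi(t_2))| \leq L(g)\, d_1(\phi(t_1),\phi(t_2)) \leq L(g)\, L(\phi)\, d_2(t_1,t_2)$, so $g \circ \phi$ is Lipschitz with $L(g \circ \phi) \leq L(\phi)\, L(g)$. Thus composition sends Lipschitz functions to Lipschitz functions, and $C_\phi(X) \subset \lip(K_2)$ whenever $X \subset \lip(K_1)$. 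Combining the three observations, $C_\phi \circ J$ exhibits the same space $X$ as a Lipschitz subspace of $C(K_2)$, of the same isometric or isomorphic type.

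I do not expect a genuine obstacle here, since the result follows directly from the stated properties of $\phi$; the only subtlety worth stating carefully is that norm preservation requires surjectivity (an injective-but-not-surjective $\phi$ would only give a quotient-type inequality), while Lipschitzness of the images requires the Lipschitz property of $\phi$. The statement is essentially an exercise once the composition operator $C_\phi$ is identified as the right tool, which is why the excerpt calls it a trivial observation following from the definition.
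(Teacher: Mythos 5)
Your proof is correct and is precisely the ``trivial observation'' the paper has in mind (the paper offers no written proof, stating that the definition implies the result trivially): composition with $\phi$ gives the operator $C_\phi(g)=g\circ\phi$, which is a linear isometry of $C(K_1)$ into $C(K_2)$ by surjectivity of $\phi$ and carries $\lip(K_1)$ into $\lip(K_2)$ by the Lipschitz property of $\phi$. Nothing is missing; your explicit remarks on where surjectivity and Lipschitzness are each used are exactly the right points to isolate.
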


In the order $\preceq$ the singleton space plays the role of minimum. On the other hand, 
$\Delta$ is a  maximum, however it is not unique. For instance, $\Delta \uplus [0,1]$ (disjoint topological union) is a maximum too. As to intermediate elements in the order $\preceq$, the most interesting example is provided by the Mazur mapping between the unit balls of Lebesgue sequence spaces, namely  
$\Phi_{q_1,q_2}: B_{\ell_{q_1}} \rightarrow B_{\ell_{q_2}}$ defined by
$$ \Phi_{q_1,q_2}(\, (x_n)_{n \in {\Bbb N}} \,) := (\mbox{sign}(x_n)|x_n|^{q_1/q_2})_{n \in {\Bbb N}} $$
which is  Lipschitz for $1 \leq q_2 \leq q_1 < \infty$, see the proof of \cite[Theorem~12.50]{banach}. The mapping is obviously continuous for the pointwise topologies, that make the balls compact (actually, they are dual unit balls with the weak$^*$ topology). Therefore $B_{\ell_{q_2}} \preceq B_{\ell_{q_1}}$ whenever $q_2 \leq q_1$.

\begin{prop}
Suppose that $C(K)$ contains an isometric Lipschitz copy of some $\ell_p$ with $p \in (1,+\infty)$, then $C(K)$ contains an isometric Lipschitz copy of $\ell_{p'}$ for every $p' \in [p,+\infty)$.
\end{prop}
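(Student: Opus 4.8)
The plan is to transport the witnessing map supplied by Theorem~\ref{iso_char} through the Mazur map between the dual balls. Write $q = p/(p-1)$ and $q' = p'/(p'-1)$ for the conjugate exponents, so that $(\ell_p)^{*} = \ell_q$ and $(\ell_{p'})^{*} = \ell_{q'}$ isometrically. Since $x \mapsto x/(x-1)$ is decreasing on $(1,+\infty)$ and $p' \geq p$, passing to conjugates reverses the inequality and gives $1 < q' \leq q < +\infty$. In particular the Mazur map $\Phi_{q,q'}: B_{\ell_q} \rightarrow B_{\ell_{q'}}$ runs in the Lipschitz direction (from the larger exponent $q$ to the smaller $q'$) and is weak$^*$ continuous for the pointwise topologies, exactly as recalled above.

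First I would apply Theorem~\ref{iso_char} to the given isometric Lipschitz copy of $\ell_p$, fixing a map $\Psi: K \rightarrow B_{\ell_q}$ that is weak$^*$ continuous, Lipschitz from $d$ to $\|\cdot\|_q$, and satisfies $\Ext(B_{\ell_q}) \subset \Psi(K) \cup (-\Psi(K))$. Because $\ell_q$ is strictly convex for $1 < q < +\infty$, its extreme points fill the whole sphere, so this inclusion reads $S_{\ell_q} \subset \Psi(K) \cup (-\Psi(K))$.

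Next I would set $\Psi' = \Phi_{q,q'} \circ \Psi: K \rightarrow B_{\ell_{q'}}$. As a composition, $\Psi'$ is weak$^*$ continuous and Lipschitz from $d$ to $\|\cdot\|_{q'}$. To verify the extreme-point covering condition I would use two elementary features of the Mazur map: it is an odd bijection of $B_{\ell_q}$ onto $B_{\ell_{q'}}$ (its inverse being $\Phi_{q',q}$), and it satisfies $\|\Phi_{q,q'}(x)\|_{q'}^{q'} = \|x\|_q^{q}$, so it carries $S_{\ell_q}$ bijectively onto $S_{\ell_{q'}}$. Combining these with the inclusion above,
$$ \Ext(B_{\ell_{q'}}) = S_{\ell_{q'}} = \Phi_{q,q'}(S_{\ell_q}) \subset \Phi_{q,q'}\big(\Psi(K) \cup (-\Psi(K))\big) = \Psi'(K) \cup (-\Psi'(K)), $$
where the final equality is precisely the oddness of $\Phi_{q,q'}$. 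Theorem~\ref{iso_char} then delivers an isometric Lipschitz copy of $\ell_{p'}$ in $C(K)$.

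I do not expect a serious obstacle here; the only delicate points are bookkeeping ones. One must keep track that conjugation reverses $p' \geq p$ into $q' \leq q$, which is what makes $\Phi_{q,q'}$ Lipschitz, and one must check that oddness together with sphere-to-sphere bijectivity of the Mazur map is exactly what converts the covering condition for $\ell_q$ into the covering condition for $\ell_{q'}$. The case $p' = p$ is trivial. Alternatively, the Gâteaux smoothness of $\ell_p$ together with Corollary~\ref{dona} would yield the stronger equality $B_{\ell_q} = \Psi(K) \cup (-\Psi(K))$, but the plain inclusion coming from strict convexity already suffices.
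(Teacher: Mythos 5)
Your proof is correct, and its skeleton is the same as the paper's: obtain $\Psi$ from Theorem~\ref{iso_char}, compose with the Mazur map $\Phi_{q,q'}$ (Lipschitz precisely because $p'\geq p$ forces $q'\leq q$, and weak$^*$ continuous on the balls), use oddness of $\Phi_{q,q'}$, and feed the composition back into Theorem~\ref{iso_char}. The one genuine point of divergence is how the covering condition is transported. The paper invokes Corollary~\ref{dona}: since $\ell_p$ is G\^ateaux smooth, one gets the stronger equality $B_{\ell_q}=\Psi(K)\cup(-\Psi(K))$, and then the surjectivity of the Mazur map gives $B_{\ell_{q'}}=(\Phi_{q,q'}\circ\Psi)(K)\cup(-(\Phi_{q,q'}\circ\Psi)(K))$ outright. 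You instead exploit strict convexity of the dual: $\Ext(B_{\ell_q})=S_{\ell_q}$, so the inclusion from Theorem~\ref{iso_char} is a sphere inclusion, and the identity $\|\Phi_{q,q'}(x)\|_{q'}^{q'}=\|x\|_q^{q}$ shows the Mazur map carries $S_{\ell_q}$ bijectively onto $S_{\ell_{q'}}=\Ext(B_{\ell_{q'}})$. Your route is slightly more self-contained, since Corollary~\ref{dona} rests on Shmulyan duality plus the Bishop--Phelps theorem, whereas the facts you use (rotundity of $\ell_q$ and the norm identity for the Mazur map) are elementary; the paper's route buys the stronger full-ball statement, which is the same mechanism it uses elsewhere (e.g., in the discussion of filling curves). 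Both arguments are complete, and you correctly flag the smoothness alternative yourself.
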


\begin{proof}
Let $q, q' \in (1,+\infty)$ be the conjugate exponents of $p, p'$ respectively. Let $\Psi: K \rightarrow B_{\ell_q}$ witnessing the isometric embedding of $\ell_p$ as Lipschitz  subspace of $C(K)$. Since $\ell_p$ is smooth, by Corollary \ref{dona} we have
$B_{\ell_q}= \Psi(K) \cup (-\Psi(K))$. Note that the Mazur mapping satisfies $\Phi_{q,q'}(-x)= -\Phi_{q,q'}(x)$. Therefore, 
$$  B_{\ell_q'}= \Phi_{q,q'} (B_{\ell_q} )= \Phi_{q,q'}(\Psi(K)) \cup \Phi_{q,q'} ((-\Psi(K))) $$
$$ =  (\Phi_{q,q'} \circ \Psi )(K)
\cup  (-  (\Phi_{q,q'} \circ \Psi )(K)), $$
which implies, by Theorem \ref{iso_char}. that the isometric embedding of $\ell_{p'}$ as a Lipschitz subspace of $C(K)$.
\end{proof}

Now we will show the use of the Szlenk index as an obstacle for Lipschitz embeddings 
For any closed subset $A \subset K$ 
we define a set derivation
$$ \langle A \rangle'_{\varepsilon} = \{x \in A: \forall \, U \, \mbox{~neighbourhood of~} x,
 \diam(A \cap U) \geq \varepsilon \} ,$$
where the diameter is computed with respect to $d$.
By iteration, the sets $\langle A \rangle^{\gamma}_{\varepsilon}$ are defined
for any ordinal $\gamma$, taking intersection in the case of limit ordinals.
The Szlenk indices of $K$ with respect to $d$ are ordinal numbers defined by
$$ Sz(K, \varepsilon) = \inf \{ \gamma : \langle K \rangle^{\gamma}_{\varepsilon}
 =\emptyset \}$$
and  $Sz(K)=\sup_{\varepsilon>0} Sz(K, \varepsilon)$. If $K$ is fragmentable by $d$, the Szlenk indices
always exist. Otherwise, for some $\varepsilon>0$ there is an ordinal $\gamma$ such that
$\langle K \rangle^{\gamma}_{\varepsilon}=\langle K \rangle^{\gamma+1}_{\varepsilon} \not = \emptyset$.
In that case we put $Sz(K,\varepsilon)=\infty$ and $Sz(K)=\infty$ with the agreement that any ordinal number is less
than $\infty$. Let us to point out that In the frame of Banach spaces, the ``Szlenk index of a Banach space $X$'' refers to the index of $B_{X^*}$ computed as above, see \cite{Lancien2} for more information on that topic.

\begin{prop}[\cite{raja2}]\label{KKK}
If $K_1 \preceq K_2$, then there is $c>0$ such that, for all $\varepsilon>0$, we have 
$$Sz(K_1, \varepsilon) \leq Sz(K_2, \varepsilon / c).$$ 
\end{prop}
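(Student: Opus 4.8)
The plan is to reduce everything to a single transfinite induction comparing the iterated derivations on $K_1$ and $K_2$ through the map $\phi$. Write $c_0=L(\phi)$; if $c_0=0$ then $\phi$ is constant and $K_1$ is a singleton, so $Sz(K_1,\varepsilon)=1$ and there is nothing to prove, hence assume $c_0>0$ and set $c=3c_0$. I would prove, by induction on the ordinal $\gamma$, the inclusion
\[
\langle K_1\rangle^{\gamma}_{\varepsilon}\subseteq \phi\bigl(\langle K_2\rangle^{\gamma}_{\varepsilon/c}\bigr)\qquad\text{for every }\varepsilon>0 .
\]
Once this is available the conclusion is immediate: if $Sz(K_2,\varepsilon/c)$ is an ordinal $\beta$, then $\langle K_2\rangle^{\beta}_{\varepsilon/c}=\emptyset$ forces $\langle K_1\rangle^{\beta}_{\varepsilon}\subseteq\phi(\emptyset)=\emptyset$, so $Sz(K_1,\varepsilon)\le\beta=Sz(K_2,\varepsilon/c)$; and if $Sz(K_2,\varepsilon/c)=\infty$ the inequality is trivial. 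Note that $c=3L(\phi)$ does not depend on $\varepsilon$, which is exactly what the statement requires.

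The heart of the matter, and the step I expect to be the main obstacle, is the single-derivation lemma: if $A_1\subseteq K_1$ and $A_2\subseteq K_2$ are closed with $A_1\subseteq\phi(A_2)$, then $\langle A_1\rangle'_{\varepsilon}\subseteq\phi(\langle A_2\rangle'_{\varepsilon/c})$. The naive set-level inclusions fail here because the Lipschitz estimate $d_1(\phi(a),\phi(b))\le c_0\,d_2(a,b)$ runs in the ``wrong'' direction and $\phi$ need not be open, so a neighbourhood of large $d_1$-diameter around $y$ can come from several fibre points each carrying little $d_2$-spread. The trick is to extract a \emph{single} far point rather than a far pair: for $y\in\langle A_1\rangle'_{\varepsilon}$ and any neighbourhood $V$ of $y$ we have $\diam(A_1\cap V)\ge\varepsilon$, so there are $a,b\in A_1\cap V$ with $d_1(a,b)>2\varepsilon/3$, and the triangle inequality gives one of them, call it $v_V$, with $d_1(y,v_V)>\varepsilon/3$. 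Lifting $v_V$ to some $w_V\in\phi^{-1}(v_V)\cap A_2$ and passing to a subnet convergent in the \emph{coarse} topology, $w_V\to x^{*}\in A_2$, continuity yields $\phi(x^{*})=\lim\phi(w_V)=y$. The key point is that the exact pointwise estimate
\[
d_2(w_V,x^{*})\ge \tfrac{1}{c_0}\,d_1(\phi(w_V),\phi(x^{*}))=\tfrac{1}{c_0}\,d_1(v_V,y)>\tfrac{\varepsilon}{3c_0}=\tfrac{\varepsilon}{c}
\]
survives the limit (no semicontinuity is even needed), while coarse convergence puts $w_V$ eventually inside every neighbourhood of $x^{*}$. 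Hence every neighbourhood $U$ of $x^{*}$ contains two points $w_V,x^{*}\in A_2\cap U$ at $d_2$-distance $\ge\varepsilon/c$, so $x^{*}\in\langle A_2\rangle'_{\varepsilon/c}$ and $y=\phi(x^{*})\in\phi(\langle A_2\rangle'_{\varepsilon/c})$. The factor $3$ (any constant exceeding $2L(\phi)$ would do) is precisely the cost of the triangle-inequality step.

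With the lemma in hand the induction is routine. The base case $\gamma=0$ is just surjectivity of $\phi$. For the successor step I apply the lemma to the closed sets $A_1=\langle K_1\rangle^{\gamma}_{\varepsilon}$ and $A_2=\langle K_2\rangle^{\gamma}_{\varepsilon/c}$, whose inclusion $A_1\subseteq\phi(A_2)$ is the induction hypothesis; since $\langle A_1\rangle'_{\varepsilon}=\langle K_1\rangle^{\gamma+1}_{\varepsilon}$ and $\langle A_2\rangle'_{\varepsilon/c}=\langle K_2\rangle^{\gamma+1}_{\varepsilon/c}$, this delivers $P(\gamma+1)$. For a limit ordinal $\lambda$ I use that the derived sets are closed and decreasing in $\gamma$, together with the compactness identity $\bigcap_{\gamma<\lambda}\phi(B_\gamma)=\phi\bigl(\bigcap_{\gamma<\lambda}B_\gamma\bigr)$ valid for a decreasing family of compacta $B_\gamma=\langle K_2\rangle^{\gamma}_{\varepsilon/c}$: if $y\in\bigcap_\gamma\phi(B_\gamma)$ then the sets $\phi^{-1}(y)\cap B_\gamma$ are nonempty, compact and nested, so by the finite intersection property their intersection $\phi^{-1}(y)\cap\langle K_2\rangle^{\lambda}_{\varepsilon/c}$ is nonempty. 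This closes the induction and completes the proof.
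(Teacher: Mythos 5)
Your proof is correct, and while it shares the paper's overall skeleton --- reduce the inequality to the fact that one derivation step commutes with $\phi$ up to a multiplicative constant, then run a transfinite induction --- your key lemma is proved by a genuinely different mechanism. The paper establishes $\langle \phi(A)\rangle'_{c\varepsilon} \subset \phi(\langle A\rangle'_{\varepsilon})$ for closed $A \subset K_2$ (with $c=2L(\phi)$) \emph{contrapositively}: if $x \in \phi(A)\setminus \phi(\langle A\rangle'_{\varepsilon})$, the compact fibre over $x$ is covered by finitely many open sets whose traces on $A$ have small $d_2$-diameter, and pushing forward one builds the open set $V=K_1\setminus \phi(A\setminus U)$ witnessing that $x$ escapes the derived set of $\phi(A)$ at level $c\varepsilon$; here the Lipschitz bound is used to push small $d_2$-diameter forward to small $d_1$-diameter. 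You argue \emph{directly}: extract a single far point $v_V$ in every neighbourhood of $y$, lift it through $\phi$ into $A_2$, use compactness of $A_2$ to get a subnet limit $x^*$ in the fibre over $y$, and run the Lipschitz bound backwards (a $d_1$ lower bound pulled back to a $d_2$ lower bound) to certify $x^*\in\langle A_2\rangle'_{\varepsilon/c}$. The paper's covering argument buys the marginally better constant ($2L(\phi)$ versus your $3L(\phi)$, irrelevant to the statement) and avoids nets; your argument buys two things: the relative form $A_1\subseteq\phi(A_2)\Rightarrow\langle A_1\rangle'_{\varepsilon}\subseteq\phi(\langle A_2\rangle'_{\varepsilon/c})$ is exactly what the successor step of the induction consumes (the paper's form needs an extra appeal to monotonicity of the derivation under inclusions), and you make explicit the limit-ordinal step via the identity $\bigcap_{\gamma<\lambda}\phi(B_\gamma)=\phi\bigl(\bigcap_{\gamma<\lambda}B_\gamma\bigr)$ for a decreasing family of compacta, a point the paper's ``it is enough to show'' leaves entirely to the reader. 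Your observation that lower semicontinuity of the metrics is never needed is accurate and applies equally to the paper's proof.
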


\begin{proof}
Let $\phi: K_2 \rightarrow K_1$ be the continuous surjection with Lipschitz constant $\lambda>0$. Take $c= 2\lambda$.
Since $\varepsilon>0$ is arbitrary, the statement is equivalent to $Sz(K_1, c \, \varepsilon) \leq Sz(K_2, \varepsilon)$. To prove this, it is enough to show that
$$ \langle \phi(A) \rangle'_{c \,\varepsilon} \subset \phi(\langle A \rangle'_{\varepsilon}) $$
for every closed subset $A \subset K_2$. 
Indeed, if
$x \in \phi(A) \setminus \phi(\langle A \rangle'_{\varepsilon})$,
then $\phi^{-1}(x)$ is compact subset of $A$ disjoint with $ \langle A \rangle'_{\varepsilon}$. The set $\phi^{-1}(x)$
can be covered with finitely many
open sets $U_1,\dots, U_n$ such that $\diam(A \cap U) < \varepsilon$.
Let $U=\bigcup_{k=1}^n U_k$. Note that  $U \cap \langle A \rangle'_{\varepsilon} = \emptyset$ and for every $y \in A \cap U$ then $d_1(\phi(y),x) < \lambda \, \varepsilon$, implying  $\diam(\phi(A \cap U) \leq c \, \varepsilon$.
Taking  the open set $V = K_1 \setminus \phi(A\setminus  U)$ we have
$ x \in \phi(A) \cap V \subset \phi(A \cap U) $
which implies that $x \not \in \langle \phi(A) \rangle'_{c \, \varepsilon}$.
\end{proof}

\begin{coro}[\cite{raja3}]
If $X$ is Gâteaux smooth and embeds isometrically as a Lipschitz subspace of $C(K)$, then there is $c>0$ such that, for all $\varepsilon>0$, we have 
$$ Sz(B_{X^{*}},\varepsilon) \leq  Sz(K, \varepsilon/c). $$
\end{coro}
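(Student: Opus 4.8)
The plan is to exhibit $B_{X^{*}}$ as a continuous and Lipschitz image of (two copies of) $K$ and then push the Szlenk derivation forward exactly as in the proof of Proposition~\ref{KKK}. Write $\Psi = J^*|_K$, where $J:X\to C(K)$ is the given isometric Lipschitz embedding. By Proposition~\ref{lipschitz} the map $\Psi$ is weak$^*$-continuous and $\lambda$-Lipschitz from $(K,d)$ to $(X^*,\|\cdot\|)$ for some $\lambda>0$. Since $X$ is G\^ateaux smooth, Corollary~\ref{dona} yields $B_{X^{*}}=\Psi(K)\cup(-\Psi(K))$ whenever $X$ is infinite-dimensional; the finite-dimensional case is trivial, because then $B_{X^{*}}$ is norm compact and so $Sz(B_{X^{*}},\varepsilon)\leq 1\leq Sz(K,\varepsilon/c)$ for every $\varepsilon$ and any $c$. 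Thus I may assume that $B_{X^{*}}$ is the union of the two weak$^*$-compact sets $\Psi(K)$ and $-\Psi(K)$, each being a continuous $\lambda$-Lipschitz image of $K$.

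First I would transport the index along $\Psi$. Applying the inclusion $\langle\phi(A)\rangle'_{2\lambda\varepsilon}\subset\phi(\langle A\rangle'_\varepsilon)$ established inside the proof of Proposition~\ref{KKK} to the continuous $\lambda$-Lipschitz surjection $\Psi:K\to\Psi(K)$, I obtain $Sz(\Psi(K),\varepsilon)\leq Sz(K,\varepsilon/(2\lambda))$. The antipodal map is a norm isometry and a weak$^*$-homeomorphism, hence it preserves the derivation and $Sz(-\Psi(K),\varepsilon)=Sz(\Psi(K),\varepsilon)$.

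The heart of the matter is to compare the index of the union $B_{X^{*}}$ with the indices of its two pieces. Here I would prove a quantitative stability of the derivation under finite unions: for closed subsets $A,B$ and $\eta>0$,
$$\langle A\cup B\rangle'_{2\eta}\subset\langle A\rangle'_\eta\cup\langle B\rangle'_\eta.$$
If $x$ belongs to only one of the sets, the other being closed provides a neighbourhood on which $A\cup B$ agrees with that set, and the inclusion is immediate; if $x\in A\cap B$ and $x$ survives in neither $\langle A\rangle'_\eta$ nor $\langle B\rangle'_\eta$, then intersecting the two witnessing neighbourhoods produces a relative neighbourhood of $x$ in $A\cup B$ of diameter below $2\eta$, a contradiction. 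Iterating this inclusion by transfinite induction --- using at a limit ordinal $\gamma$ that both derivation chains decrease, so that $\bigcap_{\beta<\gamma}(\langle A\rangle^\beta_\eta\cup\langle B\rangle^\beta_\eta)=\langle A\rangle^\gamma_\eta\cup\langle B\rangle^\gamma_\eta$ --- yields $\langle A\cup B\rangle^\gamma_{2\eta}\subset\langle A\rangle^\gamma_\eta\cup\langle B\rangle^\gamma_\eta$ for every ordinal $\gamma$, and therefore $Sz(A\cup B,2\eta)\leq\max\{Sz(A,\eta),Sz(B,\eta)\}$.

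Putting the pieces together with $A=\Psi(K)$, $B=-\Psi(K)$ and the symmetry above gives $Sz(B_{X^{*}},2\varepsilon)\leq Sz(\Psi(K),\varepsilon)\leq Sz(K,\varepsilon/(2\lambda))$; after the substitution $\varepsilon\mapsto\varepsilon/2$ this reads $Sz(B_{X^{*}},\varepsilon)\leq Sz(K,\varepsilon/(4\lambda))$, so that $c=4\lambda$ works, where $\lambda$ is the Lipschitz constant of $J^*|_K$. I expect the only genuine obstacle to be the union lemma, and within it the limit-ordinal step and the overlap case $x\in A\cap B$: everything else is a direct application of Corollary~\ref{dona}, Proposition~\ref{lipschitz} and the derivation estimate inside Proposition~\ref{KKK}. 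One should also verify that the loss factor in the union lemma is a genuine constant, independent of both $\varepsilon$ and $\gamma$, since this is precisely what makes the resulting $c$ uniform in $\varepsilon$.
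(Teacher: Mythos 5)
Your proposal is correct, and it follows the route the paper intends: the paper offers no explicit proof of this corollary, presenting it as a consequence of Proposition~\ref{KKK} (via Corollary~\ref{dona}), and your argument is precisely the fleshing-out of that derivation. The genuine added content is your union lemma $\langle A\cup B\rangle'_{2\eta}\subset\langle A\rangle'_\eta\cup\langle B\rangle'_\eta$ together with its transfinite iteration; this is exactly the step the paper's ordering $\preceq$ does not cover, since $B_{X^*}$ is not a single Lipschitz image of $K$ but a union of two. (An alternative would be to map the disjoint union of two copies of $K$ onto $B_{X^*}$ via $(t,\sigma)\mapsto\sigma\Psi(t)$ and observe that the derivation acts componentwise on clopen pieces, which avoids the factor $2$; but then one must equip the disjoint union with a legitimate lsc metric, so your in-place union lemma is arguably cleaner.) Your handling of the limit ordinals via the distributivity of decreasing intersections over finite unions is right, as is the reduction of the finite-dimensional case and the use of antipodal symmetry.

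One cosmetic point you should fix: in the overlap case $x\in A\cap B$ of the union lemma, two sets of diameter $<\eta$ sharing the point $x$ give $\diam\bigl((A\cup B)\cap U\cap V\bigr)\leq 2\eta$, not $<2\eta$, while exiting the derived set $\langle A\cup B\rangle'_{2\eta}$ requires strict inequality. This is harmless — replace the factor $2$ by any constant greater than $2$ (say $3$, giving $c=6\lambda$), since the statement only asserts the existence of some $c>0$ — and the same non-strict slack already occurs in the paper's own proof of Proposition~\ref{KKK}, where $\diam(\phi(A\cap U))\leq c\,\varepsilon$ is used to conclude $x\notin\langle\phi(A)\rangle'_{c\varepsilon}$. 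It is not a genuine gap, but as written the constant $c=4\lambda$ is not literally justified.
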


It is possible to prove that $Sz(B_{\ell_q}, \varepsilon) \sim \varepsilon^{-q} $ for $q \in [1,+\infty)$, see \cite{raja2} for the details. Therefore $B_{\ell_{q_2}} \not \preceq B_{\ell_{q_1}}$ if $q_1 < q_2$.

\begin{coro}[\cite{raja3}]\label{elep}
Let $p \in (1,+\infty)$ and let  $q$ be its conjugate exponent. Then for every $p' \in [1,p)$, the space $\ell_{p'}$ does not embed isometrically as a Lipschitz subspace of $C(B_{\ell_q})$.
\end{coro}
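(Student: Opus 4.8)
The plan is to derive a contradiction from the quantitative comparison of Szlenk indices furnished by the preceding corollary, treating the non-smooth endpoint $p'=1$ by a separate fragmentability argument. So the first thing I would do is split the range $[1,p)$ into the point $p'=1$ and the interval $(1,p)$, since smoothness—needed for the Szlenk corollary—fails exactly at $1$.

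I would dispose of the case $p'=1$ first. Since $1<q<\infty$, the space $\ell_q$ is reflexive, so $\ell_p$ is Asplund and $B_{\ell_q}=B_{(\ell_p)^*}$ is a weak$^*$-compact set fragmented by the norm. By Corollary \ref{coro_Asplund} (equivalently, by Theorem \ref{main} applied to the fragmentable $K=B_{\ell_q}$), the space $C(B_{\ell_q})$ contains no Lipschitz copy of $\ell_1$—not even an isomorphic one. This settles $p'=1$ outright, the obstruction there being of a coarser, Asplund nature than for $p'>1$.

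Now fix $p'\in(1,p)$ with conjugate exponent $q'$. Here the crude Asplund obstruction of Theorem \ref{Asplund_lip} is useless, since $\ell_{p'}$ is reflexive and hence Asplund; one must instead exploit the finer quantitative information carried by the Szlenk index. Because $t\mapsto t/(t-1)$ is strictly decreasing, $p'<p$ forces $q'>q$. The space $\ell_{p'}$ is Gâteaux (indeed uniformly) smooth and its dual unit ball is $B_{\ell_{q'}}$. Arguing by contradiction, I would suppose $\ell_{p'}$ embeds isometrically as a Lipschitz subspace of $C(B_{\ell_q})$; the preceding corollary then yields $c>0$ with
$$ Sz(B_{\ell_{q'}},\varepsilon)\le Sz(B_{\ell_q},\varepsilon/c)\qquad\text{for all } \varepsilon>0. $$

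Finally, I would feed the asymptotics $Sz(B_{\ell_r},\varepsilon)\sim\varepsilon^{-r}$ (valid for $r\in[1,\infty)$) into this inequality. The left-hand side is bounded below by a multiple of $\varepsilon^{-q'}$ and the right-hand side above by a multiple of $(\varepsilon/c)^{-q}=c^{q}\varepsilon^{-q}$, so the inequality forces $\varepsilon^{q-q'}$ to remain bounded as $\varepsilon\to 0^{+}$; since $q-q'<0$, this quantity blows up, a contradiction. I expect the delicate step to be precisely this quantitative comparison: one must use the \emph{two-sided} nature of the $\sim$ estimate—a lower bound for the $\ell_{q'}$ index together with an upper bound for the $\ell_q$ index—and keep track of the $\varepsilon$-independent constant $c$, which cannot absorb the divergent factor $\varepsilon^{-(q'-q)}$. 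The only genuinely separate piece of the argument remains the endpoint $p'=1$, already handled above.
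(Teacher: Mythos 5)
Your proposal is correct and follows essentially the argument the paper intends (the corollary is stated without explicit proof, but the surrounding results make the route clear): for $p'\in(1,p)$ you invoke the smooth-case Szlenk comparison $Sz(B_{\ell_{q'}},\varepsilon)\le Sz(B_{\ell_q},\varepsilon/c)$ and contradict it with the two-sided asymptotics $Sz(B_{\ell_r},\varepsilon)\sim\varepsilon^{-r}$, exactly as the paper's remark ``$B_{\ell_{q_2}}\not\preceq B_{\ell_{q_1}}$ if $q_1<q_2$'' suggests. Your separate treatment of the endpoint $p'=1$ via fragmentability of $B_{\ell_q}$ and Corollary \ref{coro_Asplund} is also the right (and necessary) move, since Gâteaux smoothness fails there and the Szlenk corollary cannot be applied.
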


Our results with Lipschitz isomorphic embeddings are not so satisfactory distinguishing among the balls $B_{\ell_q}$. The necessity of taking the closed convex hull implies some loss of information (the Szlenk index is increased by a factor of the form $\varepsilon^{-1}$, \cite[Corollary 3.7]{raja2}). However, some interesting results can be established for infinite Szlenk indices.

\begin{prop}
Let $K$ be metrizable compact together a lsc metric $d$. If the space $C(K)$ is isomorphic Lipschitz universal for the separable reflexive Banach spaces, then $K$ is not fragmentable by $d$.
\end{prop}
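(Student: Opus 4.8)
The plan is to argue by contradiction, using the Szlenk index as a counter for the ``complexity'' that a single $C(K)$ can host. Suppose that $K$ \emph{is} fragmentable by $d$. Since $K$ is metrizable, fragmentability coincides with $d$-separability, and the transfinite derivation defining the indices strictly decreases a nested family of closed sets in a second-countable space, so each $\varepsilon$-index $Sz(K,\varepsilon)$ is a countable ordinal. As $\varepsilon \mapsto Sz(K,\varepsilon)$ is nonincreasing, $Sz(K)=\sup_n Sz(K,1/n)$ is a countable supremum of countable ordinals, whence $Sz(K)=\alpha<\omega_1$.

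Next I would transport this countable bound to the dual balls of the embedded spaces. Let $X$ be any separable reflexive space and $J:X\rightarrow C(K)$ an isomorphic Lipschitz embedding; since $X^*$ is separable, $B_{X^*}$ and its subsets are $w^*$-metrizable. By Proposition \ref{lipschitz}, $J^*|_K$ is a continuous Lipschitz surjection onto the $w^*$-compact set $J^*(K)$, so $J^*(K)\preceq K$ and Proposition \ref{KKK} gives $Sz(J^*(K))\leq Sz(K)$ after taking the supremum over $\varepsilon$. Using that the antipodal map is an isometry, that the $\varepsilon$-index of a finite union is bounded by the sum of the $\varepsilon$-indices, and the inclusion $\delta B_{X^*}\subset \cconv^{w^*}(J^*(K)\cup(-J^*(K)))$ of Proposition \ref{lipschitz}, it only remains to control the index of a closed convex hull. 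Here the estimate of \cite[Corollary 3.7]{raja2} inflates the $\varepsilon$-index by a finite factor of order $\varepsilon^{-1}$; since multiplying a countable ordinal by a finite number stays below $\alpha\cdot\omega$, passing to the supremum over $\varepsilon$ absorbs this loss and yields $Sz(B_{X^*})\leq \beta$ for a fixed countable ordinal $\beta=\beta(K)$. The point I want to stress is that although the Lipschitz and isomorphism constants of $J$ vary with $X$, they only rescale the metric and hence reparametrise the $\varepsilon$-axis, so they disappear when one passes to the supremum index: the bound $\beta$ is therefore \emph{uniform} over all $X$.

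Finally I would invoke the classical fact that the Szlenk indices of separable reflexive Banach spaces are cofinal in $\omega_1$, that is, for every countable ordinal there is a separable reflexive space whose Szlenk index exceeds it (see \cite{Lancien2}). Choosing such an $X$ with $Sz(X)=Sz(B_{X^*})>\beta$ produces a separable reflexive space that cannot embed isomorphically as a Lipschitz subspace of $C(K)$, contradicting the assumed universality. Hence $K$ cannot be fragmentable by $d$.

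I expect the crux to be the second step, specifically making the convex-hull bound uniform in $X$: one must verify that the $\varepsilon^{-1}$ loss of \cite[Corollary 3.7]{raja2}, compounded with the embedding-dependent constants, never escapes a single countable ordinal once the supremum over $\varepsilon$ is taken. This is exactly the delicate point flagged in the remark preceding the statement, where the same loss is fatal for distinguishing the individual balls $B_{\ell_q}$ but turns out to be harmless for the coarse conclusion ``bounded countable Szlenk index''. The image and finite-union estimates are routine via Proposition \ref{KKK} and standard derivation bookkeeping, and the cofinality input is off the shelf; it is the interplay between the convex-hull inflation and the uniformity in $X$ that requires care.
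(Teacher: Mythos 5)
Your proposal is correct and follows essentially the same route as the paper: assume fragmentability, deduce that $Sz(K)$ is a countable ordinal, transfer a countable bound to $Sz(B_{X^*})$ for every isomorphic Lipschitz subspace $X$ via Proposition \ref{lipschitz} and Proposition \ref{KKK}, and contradict Szlenk's theorem on the cofinality in $\omega_1$ of the Szlenk indices of separable reflexive spaces. The paper's proof is just a terser version of this (it states $Sz(B_{X^*})\leq Sz(K)$ without dwelling on the convex-hull inflation); your careful handling of that step, showing the loss stays below a fixed countable ordinal uniformly in $X$, fills in exactly what the paper leaves implicit.
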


\begin{proof}
If $K$ was fragmentable, then $Sz(K)$ would be a countable ordinal and $Sz(B_{X^*}) \leq Sz(K)$ for all $X$ an isomorphic Lipschitz subspace of $C(K)$. Szlenk proved in \cite{szlenk} (see also \cite{Lancien2}) that there are reflexive Banach spaces $X$ with $Sz(B_{X^*})$ an arbitrarily high countable ordinal, leading to a contradiction.
\end{proof}

\section*{Acknowledgements}

In 2003, I was doing a postdoc at the Hebrew University of Jerusalem under the supervision of Joram Lindenstrauss when I obtained Corollary \ref{coro_Asplund}. I often enjoy myself remembering the morning as I came to Joram's office and told him ``Do you know that Asplundness can be characterized by the lack of copies of $\ell_1$?''. Some years later, during a Winter School in Czech Republic, I rediscovered Donoghue's surprising result and I tried to publish it. I am still indebted to Bill Johnson who kindly informed me that I was 50 years late in his nonacceptance letter. Finally, I would also thank the referees for their many corrections and comments that have allowed me to greatly improve the readability and appearance of the paper.

{\footnotesize

}

\vspace{1cm}

\begin{flushright}
Departamento de Matem\'aticas\\ Universidad de Murcia\\
Campus de Espinardo\\ 30100 Espinardo, Murcia, SPAIN\\
E-mail: matias@um.es
\end{flushright}

\end{document}